\newcommand{\RR}{\rm I\kern -1.6pt{\rm R}}
\newtheorem{theorem}{Theorem}[section]
\newtheorem{lemma}{Lemma}[section]
\newtheorem{assumption}{Assumption}[section]
\newtheorem{definition}{Definition}[section]
\newtheorem{corollary}{Corollary}[section]
\numberwithin{equation}{section} %\numberwithin{figure}{section}
\begin{document}

\date{}
\title[The period function for potential system]
{Monotonicity and critical points of the period function for potential system}
\author{Jihua Wang*}
\thanks{*Corresponding author:~wangjh2000@163.com (J. Wang)}
\address{}

\maketitle
{\bf Abstract:} This paper is concerned with the analytic
behaviors (monotonicity, isochronicity and the number of critical
points) of period function for potential system $\ddot{x}+g(x)=0$.
We give some sufficient criteria to determine the monotonicity and
upper bound to the number of critical periods. The conclusion is based on the semi-group properties of (Riemann-Liouville) fractional integral operator of order $\frac{1}{2}$ and Rolle's Theorem. In polynomial potential settings, bounding the the number of critical periods of potential center can be reduced to counting the real zeros of a semi-algebraic system. From which we prove that if nonlinear potential $g$ is odd, the potential center has at most $\frac{deg(g)-3}{2}$ critical periods. To illustrate its applicability some known results are proved in more efficient way,
and the critical periods of some hyper-elliptic Hamiltonian systems of
degree five with complex critical points are discussed, it is proved
the system can have exactly two critical periods.

{\bf Keywords}~~Period function $\cdot$ Potential system $\cdot$
Monotonicity $\cdot$ Critical period $\cdot$ Semi-algebraic system $\cdot$ Fractional calculus.

{\bf Mathematics Subject Classification}~34C10 $\cdot$ 34C15 $\cdot$
37G15.

\section{Introduction and statement of the results}
This paper is concerned with the period of periodic solutions of
conservative second order equation $\ddot{x}+g(x)=0$ or equivalently
the period function of centers of planar potential system with the
form
\begin{equation}\label{SN}
\begin{split}
\frac{dx}{dt}=-y, \quad \frac{dy}{dt}&=g(x).
\end{split}
\end{equation}
The system has one degree of freedom``kinetic+potential" Hamiltonian
\begin{equation}\label{HAMFUN}
H(x,y)=\frac{y^2}{2}+G(x)=h,\quad
\end{equation}
where $G(x)=\int_0^xg(s)ds$ is a primitive function of $g(x)$ which
denotes the potential energy of this classical mechanical system.

Recall that a singular point of planar differential system is a
center if it has a punctured neighborhood foliated with a continuous
band of periodic orbits $\Gamma_h$ which is determined by equation
\eqref{HAMFUN}. The largest punctured neighborhood with this
property is called period annulus, denoted by $\mathcal{P}$. It is
obvious that the vector field generated by system \eqref{SN} is
symmetric with respect to $x-$axis, we denote the projection of
$\mathcal{P}$ onto $x-$axis by $(x_m,x_M)$. Parameterizing the
periodic orbits by energy level $h$ and assigning the minimal
positive period of the motion along each of the periodic orbit
$\Gamma_h$, then we have a energy-period function
$T:\,(0,h_s)\mapsto\,(0,+\infty)$. As $g$ is a polynomial, the
period of $\Gamma_h$ is given by the following Abelian integral
\begin{equation}\label{EXPT1}
T(h)=-\oint_{\Gamma_h}\frac{dx}{y}=\sqrt{2}\int_{x_{-}(h)}^{x_{+}(h)}\frac{dx}{\sqrt{h-G(x)}},
\end{equation}
where $x_{-}(h)<0<x_{+}(h)$ are the abscissas of the intersection
points between $\Gamma_h$ and $x-$axis. We say $T$ is monotone
increasing (or decreasing) if for any couple of periodic orbits
$\Gamma_{h_1}$ and $\Gamma_{h_2}$ in $\mathcal{P}$, there holds
$T(h_1)<T(h_2)$ for $h_1<h_2.$ (resp. $T(h_1)>T(h_2)$). The local
maximum or minimum of the period function is called critical period,
the periodic orbit with this property is called critical periodic
orbit \cite{JVXZ}. If the period of oval $\Gamma_h$ is independent
with the energy level $h$, namely $T(h)$ is constant, then the
center is called isochronous, see the survey paper \cite{JCMA}.

Aside from their intrinsic interest, the analytical properties
(monotonicity, isochronicity  or critical points) of period function
arise from the study of oscillation features such as the soft or
hard springs, also occur in the study of subharmonic bifurcations of
periodic oscillations and the bifurcation of steady-state solutions
of a reaction-diffusion equation, in the problem on the existence
and uniqueness of solutions in boundary value problem
\cite{WSL,PBSC}. In some sense the study of critical periodic orbits
is analogous to the study of limit cycles \cite{CCMJ,MGJV},
which is the main concern of the celebrated Hilbert's 16th problem
and its various weakened versions.

In what follows, we suppose that the annulus $\mathcal{P}$ only
surrounds the center and no other equilibria, it follows the
equality $G(x_m)=G(x_M)$ holds. Without loss of generality, we
assume the center of system \eqref{SN} locates at the origin, it is
known that the multiplicity of $x=0$ as the zero of $g(x)$ must be
odd. Therefore we assume
\begin{assumption}\label{PROPG}
The potential $g(x)$ is smooth enough in $(x_m, x_M)$ and $k$ is
non-negative integer.
\begin{enumerate}
  \item $g(0)=g^{\prime}(0)=\cdots=g^{(2k)}(0)=0,\; g^{(2k+1)}(0)>0$
   and $xg(x)>0$ for all $x\in\,(x_m,x_M)\setminus\,\{0\}$;
  \item $G(x_m)=G(x_M)=h_s$ for $h_s\in(0,+\infty].$
\end{enumerate}
\end{assumption}

Under the Assumption \ref{PROPG}, if $k=0$ the origin is an
elementary (or nondegenerate) center for the linear part of the
singular point has a pair of conjugate pure imaginary eigenvalues
(or the Jacobian matrix at which is nondegenerate, i.e., its
determinant does not vanish). If $k\geq\,1$ system \eqref{SN} at the origin $O$ has nilpotent linear part
$-y\frac{\partial}{\partial\,x}$, accordingly it is called a
nilpotent center. If the period annulus $\mathcal{P}$ is unbounded,
the center is said to be global. Hamiltonian \eqref{HAMFUN} has a
local minimum $H(0,0)=0$ at the origin, we assert that
$G(x)=\frac{A}{2}x^{2k+2}+h.o.t$ and $A>0$, thus there exists an
analytic involution $\sigma$ on $(x_m,\,x_M)$ such that
\begin{equation}\label{INVOL}
G(x)=G(z),\,z=\sigma(x)=-x+o(x), \; x\in\,(x_m,x_M)\backslash\{0\}.
\end{equation}
Recall that an involution $\sigma$ is a diffeomorphism with a unique
fixed point satisfying $\sigma\circ \sigma$=Id and $\sigma\neq\,$Id.
Following the terminology proposed in \cite{MFJ}, we define the
balance of function $f$ with respect to the involution $\sigma$ as
\[\mathcal{B}_{\sigma}(f)(x)=f(x)-f(z).\]

we can state the main results of
the present paper.
\begin{theorem}\label{PFCP}
Consider the potential system \eqref{SN}, suppose $g$ is smooth in
$(x_m,x_M)$ and satisfies Assumption \ref{PROPG}, for all
$x\in\,(0,x_M)$,
\begin{enumerate}\label{PFDISCR}
\item  if $\mathcal{B}_{\sigma}(\delta)(x)>0$\,($<0$, resp.) then the period function $T(h)$ is monotone
increasing (or decreasing, resp.) in $(0,h_s)$;
\item  if $\mathcal{B}_{\sigma}(\delta)(x)$ has $l$ zeros, then system \eqref{SN} has at most $l$
critical periods; moreover if $\mathcal{B}_{\sigma}(\frac{G}{g^2})(x)$ has $l$ zeros as well, then the system \eqref{SN} has exactly $l$
critical periods;
\item if $\mathcal{B}_{\sigma}(\delta)(x)\equiv\,0$, then the origin of
system \eqref{SN} is an isochronous center.
\end{enumerate}
\end{theorem}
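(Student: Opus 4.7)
The plan is to encode the period function as a Riemann--Liouville fractional integral of order $\tfrac12$ acting on a weight built from $g$ and the involution $\sigma$, and then to exploit the semi-group identity $I^{1/2}\!\circ I^{1/2}=I$ together with Rolle's Theorem, as announced in the abstract.

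First, I would split the integral in \eqref{EXPT1} at $x=0$ and perform the change of variable $u=G(x)$ on each branch, using that $G$ is a diffeomorphism from $(0,x_M)$ and from $(x_m,0)$ onto $(0,h_s)$. Recombining the two branches through $x\mapsto\sigma(x)$ from \eqref{INVOL} casts $T(h)$ as an Abel transform
\[
T(h)=\sqrt{2}\int_0^h\frac{\Psi(u)}{\sqrt{h-u}}\,du,
\]
where $\Psi(u)$ is, up to a positive factor, the balance with respect to $\sigma$ of a simple function of $1/g$. This is, up to a constant, $I^{1/2}\Psi$.

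Next, I would compute $T'(h)$ by differentiating this fractional integral and integrating by parts once; the semi-group identity then rewrites
\[
T'(h)=\kappa\int_0^h\frac{\mathcal{B}_\sigma(\delta)\!\bigl(x(u)\bigr)}{\sqrt{h-u}}\,\omega(u)\,du,
\]
with $\kappa>0$, a strictly positive weight $\omega$, and $x(u)$ the positive branch of $G^{-1}$. Items (1) and (3) fall out immediately: a definite sign of $\mathcal{B}_\sigma(\delta)$ on $(0,x_M)$ transfers to $T'(h)$ on $(0,h_s)$, while $\mathcal{B}_\sigma(\delta)\equiv 0$ forces $T'\equiv 0$, i.e.\ isochronicity. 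For the upper bound in (2), the Abel transform is injective and, combined with Rolle's Theorem applied to the fractional primitive producing $T'(h)$, bounds the number of zeros of $T'(h)$ by that of $\mathcal{B}_\sigma(\delta)$. For the exact count, I would differentiate once more and apply the same machinery: a second pass of semi-group plus integration by parts produces a representation of $T''(h)$ in which $\mathcal{B}_\sigma(G/g^2)$ appears as the kernel's controlling factor. Matching the upper bound on zeros of $T'(h)$ from $\mathcal{B}_\sigma(\delta)$ against a lower bound produced by the sign alternations of $\mathcal{B}_\sigma(G/g^2)$ pins the count of critical periods at exactly $l$.

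The most delicate point, I expect, is the algebraic reduction identifying the kernel of the fractional integral with $\mathcal{B}_\sigma(\delta)$: one must check that the change of variable $u=G(x)$ commutes with the balance operator in the precise sense needed and that the boundary terms from integration by parts vanish thanks to Assumption~\ref{PROPG} (the conditions $g^{(j)}(0)=0$ for $j\le 2k$ and $G(x_m)=G(x_M)$ are what make this possible). A secondary obstacle is the Rolle-style counting argument through the Abel transform, which needs either a Chebyshev-system property of $I^{1/2}$ on the class of functions at hand or a bootstrapping reduction to honest $C^1$ primitives via the semi-group identity to make the zero-counting fully rigorous.
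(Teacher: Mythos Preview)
Your overall plan---write $T'(h)$ as an Abel transform (Riemann--Liouville $I^{1/2}$) of $\mathcal{B}_\sigma(\delta)$ after the change of variable $u=G(x)$, and then exploit $I^{1/2}\circ I^{1/2}=I^1$ together with Rolle---is precisely the paper's strategy, and your treatment of items (1), (3) and of the upper bound in (2) matches the paper's argument (the Variation-Diminishing step is packaged there as Theorem~\ref{PVD}). One route difference: rather than differentiating the Abel form of $T(h)$ directly, the paper first passes through the polar-type coordinates \eqref{TRANS} of Yang--Zeng to obtain \eqref{EXPTD11}, and only then changes back to the key identity
\[
\sqrt{2}\,h\,T'(h)=\int_0^h\frac{\mathcal{B}_\sigma(\delta)\bigl(G^{-1}(u)\bigr)}{\sqrt{h-u}}\,du,
\]
with no extra weight $\omega$ left over.

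There is, however, a genuine gap in your plan for the \emph{exact} count in (2). You propose to differentiate once more and locate $\mathcal{B}_\sigma(G/g^2)$ inside a representation of $T''(h)$; but $\mathcal{B}_\sigma(G/g^2)$ does not appear there (the paper's formula \eqref{EXPTD2} for $T''$ involves a different function $\phi$), and in any case a further differentiation would produce another \emph{upper} bound, not the lower bound you need. The paper goes in the opposite direction: since $\delta=(G/g^2)'$, the \emph{ordinary} primitive $I^1$ of the Abel kernel $u\mapsto \mathcal{B}_\sigma(\delta)(G^{-1}(u))$ is, up to the diffeomorphism $G$, just $\mathcal{B}_\sigma(G/g^2)$. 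Theorem~\ref{PVD} then yields the sandwich
\[
\sharp\,I^1(k)\ \le\ \sharp\,I^{1/2}(k)\ \le\ \sharp\,k,
\]
with $k=\mathcal{B}_\sigma(\delta)\circ G^{-1}$ and $I^{1/2}(k)$ proportional to $h\,T'(h)$. If both ends equal $l$, so does the middle, giving exactly $l$ critical periods. In short: to make the bound sharp you must \emph{integrate} the kernel once, not differentiate $T'$ again.
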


where
\[\delta(x)=\big(\frac{G}{g^2})^{\prime}(x)=\frac{g^2-2Gg^{\prime}}{g^3}(x).\]

Note that the function $\frac{G}{g^2}(x)$ and its derivative
$(\frac{G}{g^2})^{\prime}(x)$ play an important roll in the study of
the period function, which appear in many works in this field, see
Loud \cite{WSL}, Coppel and Gavrilov \cite{CG}, Chicone \cite{CCE}, Zeng and Jing \cite{XZZJ}, A. Cima et al. \cite{CGS}, Li and Lu \cite{CLKL}, Yang and Zeng \cite{LYXZ}, Villadelprat and Zhang \cite{JVXZ} and the references therein. If $k=0$ from the Assumption
\ref{PROPG}, it can be derived that $\delta(x)$ is continuous in
$(x_m,x_M)$ and continuously differentiable in $(x_m,0)$ and
$(0,x_M).$ In fact, by applying L'Hospital's rule and Assumption
\ref{PROPG}, we get
\begin{equation}\label{DELEO}
\delta(0)=-\frac{1}{3}\frac{g^{\prime\prime}}{(g^{\prime})^2}(0),\quad
\delta^{\prime}(0)=\frac{5(g^{\prime\prime})^2-3g^{\prime}g^{\prime\prime\prime}}{12(g^{\prime})^3}(0).
\end{equation}
If $k\geq\,1$, it is easy to deduce from Assumption \ref{PROPG} that
\begin{equation}\label{DELNO}
\delta(0^+)=-\infty,\quad \delta(0^-)=\infty.
\end{equation}
Note that the outer boundary of period annulus must contain singular
points. Thus from Assumption \ref{PROPG}, if $g(x_m)g(x_M)=0$
therefore
\begin{equation}\label{DELWB}
\mathcal{B}_{\sigma}(\delta)(x_M)=\delta(x_M)-\delta(x_m)=\infty.
\end{equation}

Consider potential system $\ddot{x}+g(x)=0$, the authors \cite{PBSC}
proved for generic $g$, the period function is morse function, which means
all the critical points of $T$ are nondegenerate. If the potential $g$ is a polynomial in $x$,
it is proved in \cite{CCMJ} that the unique polynomial isochronous center of
potential system is the linear and global one. Recall that only nondegenerate center
can be isochronous. C. Chicone, F. Dumortier proved in Theorem 1.2 of \cite{CCFD} that the Hamiltonian
system \eqref{SN} with polynomial potential $g$ of degree two or
more has finite number of critical periods. Chow and Sanders
\cite{SCJS} ask whether there exists a bound on the number of
critical periods and assume that the bound if it exists, depends on
the degree of the polynomial $g(x)$. In what follows, we propose a
positive answer to the problem. By applying Theorem \ref{PFCP}
together with Rolle's Theorem and its generalization, Theorem \ref{GRTHEM}, we state the second main results as follows
\begin{theorem}\label{NCPSN}
Consider the potential system \eqref{SN} with polynomial potential
$g$ of degree $\geq\,2$, under the assumption \eqref{PROPG}, we
have
\begin{enumerate}
\item if the potential $g$ is odd function, thus the system
\eqref{SN} has at most $\frac{deg(g)-3}{2}$ critical periods;

\item if all the zeros of $g$ are real, then system \eqref{SN} has at most one critical period.
\end{enumerate}
\end{theorem}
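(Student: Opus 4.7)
The plan is to invoke Theorem~\ref{PFCP}(2), so in both cases it suffices to bound the number of zeros of $\mathcal{B}_{\sigma}(\delta)(x)$ on $(0,x_M)$, where $\delta=(g^2-2Gg')/g^3$ and $\sigma$ is the involution determined by $G(x)=G(\sigma(x))$. A key preliminary observation I will use throughout is that by Assumption~\ref{PROPG}, $g(x)>0$ and $G(x)>0$ on $(0,x_M)$, and the polynomial $\phi(x):=g^2-2Gg'$ (which is the numerator of $\delta$) satisfies $\phi(0)=0$ and
\[\phi'(x)=-2G(x)g''(x).\]

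For item (1), oddness of $g$ forces $G$ to be even, so the implicit involution collapses to $\sigma(x)=-x$. Since $g^2$ and $Gg'$ are even while $g^3$ is odd, $\delta$ is odd and thus $\mathcal{B}_{\sigma}(\delta)(x)=2\delta(x)$. The zeros of $\mathcal{B}_{\sigma}(\delta)$ on $(0,x_M)$ are therefore the zeros of $\phi$ there. Now $g''$ is odd of degree $\deg(g)-2$, vanishes at $0$, and its remaining real zeros come in $\pm$-pairs, so it has at most $\frac{\deg(g)-3}{2}$ positive real zeros. Since $G>0$ on $(0,x_M)$, this is also the bound for zeros of $\phi'$ there. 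Finally, because $\phi(0)=0$, Rolle's Theorem applied on $[0,x_M)$ gives at most as many zeros of $\phi$ on $(0,x_M)$ as $\phi'$ has, i.e. at most $\frac{\deg(g)-3}{2}$, and Theorem~\ref{PFCP}(2) concludes.

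For item (2), the involution $\sigma$ is no longer elementary, so I cannot reduce to a one-variable count. My plan is to invoke the semi-algebraic reduction advertised in the introduction: a critical period corresponds to a real solution $(x,z)\in(0,x_M)\times(x_m,0)$ of the system $\{G(x)=G(z),\ \delta(x)=\delta(z)\}$, equivalently $\{G(x)=G(z),\ \phi(x)g(z)^3=\phi(z)g(x)^3\}$. I will then use the fact that if every zero of $g$ is real, then by repeated application of Rolle's classical theorem so are those of $g'$, $g''$, and hence (after a direct computation using $\phi'=-2Gg''$) those of $\phi$. The strong interlacing that this imposes, combined with the sign information $g(x)>0>g(z)$ and $G(x)=G(z)>0$ along the involution curve, is what I will use (together with the generalized Rolle Theorem~\ref{GRTHEM}) to show that the polynomial defining the second equation restricted to the involution curve changes sign at most once as $x$ runs through $(0,x_M)$.

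The main obstacle I expect is precisely this last step in item (2). The odd-symmetric case (1) is transparent because symmetry collapses a two-variable problem to a one-variable Rolle count; in case (2) the all-real-roots hypothesis is a \emph{global} statement about $g$, while we need a \emph{local} bound on the number of real solutions of a semi-algebraic system restricted to the involution curve $G(x)=G(z)$ inside $(0,x_M)\times(x_m,0)$. Transporting the interlacing/Schur-type consequences of the real-roots condition through the implicit function $z=\sigma(x)$, and turning them into a single-sign-change statement for $\mathcal{B}_{\sigma}(\delta)$, is the delicate point; handling the boundary behaviour at $x=0$ (where both $\delta$ and $\sigma$ are smooth in the elementary case but singular in the nilpotent case, via \eqref{DELEO}--\eqref{DELWB}) and at $x=x_M$ (where \eqref{DELWB} forces $\mathcal{B}_{\sigma}(\delta)\to\infty$) to pin down the correct count of at most one zero is the second subtlety I anticipate.
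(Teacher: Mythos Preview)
Your argument for part (1) is essentially the paper's: oddness collapses $\sigma$ to $-x$, so $\mathcal{B}_\sigma(\delta)=2\delta$, and a Rolle count on $\phi=g^2-2Gg'$ via $\phi(0)=0$, $\phi'=-2Gg''$ gives the bound $\tfrac{\deg(g)-3}{2}$.

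For part (2) there is a genuine gap. Your plan is to extract ``interlacing'' information from the hypothesis that all zeros of $g$ (hence of $g',g'',\phi$) are real, and to push this through the involution curve to get a single sign change of $\mathcal{B}_\sigma(\delta)$. You correctly flag this as the delicate step, but you have not identified the lemma that actually makes the Generalized Rolle machinery terminate at the bound $1$. The paper's key input is not about the zeros of $\phi$ at all: it is the pointwise inequality
\[
\delta'(x)>0\quad\text{for all }x\in(x_m,0)\cup(0,x_M),
\]
valid whenever the polynomial $g$ has only real zeros (this is the content of the paper's Lemma~\ref{PROPg}, which generalizes Lemma~4.5 of \cite{LYXZ}). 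With this in hand, one application of Theorem~\ref{GRTHEM} to $F(x,z)=G(x)-G(z)$ and $\mathcal{G}(x,z)=\delta(x)-\delta(z)$ finishes immediately: the Jacobian system becomes
\[
G(x)=G(z),\qquad \delta'(x)\,g(z)=\delta'(z)\,g(x),
\]
and on $(0,x_M)\times(x_m,0)$ one has $g(x)>0>g(z)$ and $\delta'(x),\delta'(z)>0$, so the second equation has \emph{no} solution (its left side is negative, its right side positive). Hence the original system has at most $1+0=1$ solution, and Theorem~\ref{PFCP}(2) gives at most one critical period.

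In short, the step you anticipated as hard is hard only because you are aiming at the wrong intermediate statement. Replace the real-zeros-of-$\phi$ heuristic by the monotonicity of $\delta$ on each half-interval, and the two-variable Rolle argument becomes a one-line sign check.
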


It is worth mentioning that the authors \cite{CCMJ} give even potential conjecture that if the potential energy $G(x)$ is even, the center of the system at the origin has at most $\frac{deg(g)-3}{2}$ local
critical periods. Here we confirm the result holds globally.

To ensure the monotonicity of period function, i.e., the absence of
critical periodic orbits,  from Theorem \ref{PFCP}(1), it suffices
to show
\[\mathcal{B}_{\sigma}(\delta)(x)=\delta(x)-\delta(\sigma(x))>0,(<0,resp,)\]
where $x_m<\sigma(x)<0<x<x_M$. Hence it is easy to get Chicone's
criterion \cite{CCE}
\begin{corollary}\label{COR1}
If $\delta^{\prime}(x)>0$ (or $\delta^{\prime}(x)<0$, respectively)
in $(x_m,x_M)$, then $T(h)$ is monotone increasing (or decreasing,
respectively) in $(0,h_s)$.
\end{corollary}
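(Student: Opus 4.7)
The plan is to deduce the corollary as an immediate consequence of Theorem \ref{PFCP}(1). That theorem already characterizes the monotonicity of $T(h)$ through the sign of the balance $\mathcal{B}_{\sigma}(\delta)(x)=\delta(x)-\delta(\sigma(x))$ on $(0,x_M)$, so my task is simply to show that a definite sign of $\delta'$ on $(x_m,x_M)$ forces the same definite sign on the balance.

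First I would invoke the properties of the involution $\sigma$ recorded in \eqref{INVOL}: $\sigma$ has $0$ as its unique fixed point and satisfies $\sigma(x)=-x+o(x)$, so $\sigma$ interchanges $(0,x_M)$ with $(x_m,0)$. In particular, for every $x\in(0,x_M)$ one has $\sigma(x)<0<x$. Under the hypothesis $\delta'(x)>0$ throughout $(x_m,x_M)$ — a well-posed condition in the nondegenerate setting $k=0$, where by \eqref{DELEO} the function $\delta$ extends continuously across the origin — the mean value theorem on the interval $[\sigma(x),x]$ produces a point $\xi$ in that interval with
\[\mathcal{B}_{\sigma}(\delta)(x)=\delta(x)-\delta(\sigma(x))=\delta'(\xi)\bigl(x-\sigma(x)\bigr)>0.\]
Applying Theorem \ref{PFCP}(1) then yields that $T(h)$ is monotone increasing on $(0,h_s)$. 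The case $\delta'(x)<0$ is verbatim with all inequalities reversed and delivers monotone decrease.

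There is essentially no obstacle in the argument; the corollary is a one-line specialization of the main theorem. The only point deserving a short remark is that \eqref{DELNO} shows $\delta(0^\pm)=\mp\infty$ whenever $k\geq 1$, so the pointwise hypothesis $\delta'(x)>0$ on the full interval $(x_m,x_M)$ is a natural condition only in the elementary (nondegenerate) regime; this matches the original scope of Chicone's criterion in \cite{CCE}, and no additional analysis is required beyond the invocation of Theorem \ref{PFCP}(1).
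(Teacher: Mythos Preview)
Your proposal is correct and matches the paper's own reasoning: the paper does not even give a formal proof of the corollary but simply observes, right before stating it, that by Theorem~\ref{PFCP}(1) one need only check $\mathcal{B}_{\sigma}(\delta)(x)=\delta(x)-\delta(\sigma(x))>0$ for $x_m<\sigma(x)<0<x<x_M$, and then declares ``Hence it is easy to get Chicone's criterion.'' Your mean value argument and your remark on the nilpotent case $k\geq 1$ are both appropriate elaborations of that one-line deduction.
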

This criterion provides a sufficient condition for monotonicity
which is easy to verify, while its disadvantages is that the
condition is far away from being necessary, as a generalization see Theorem 1 and
corollary 1 of \cite{XZZJ} for instance.

One necessary condition on the
monotonicity of period for the potential center \eqref{SN} was stated in Theorem 2.6 of \cite{FMJV1} as follows
\begin{lemma}\label{MFNC}
If the period function $T(h)$ of system \eqref{SN} is monotone in $(0,h_s)$, then the function
$x\mapsto\,\frac{G(x)}{(x-\sigma(x))^2}$ is monotone in $(0,x_s),$ where $G(x_s)=h_s$ corresponds to
the energy level of outer contour of period annulus.
\end{lemma}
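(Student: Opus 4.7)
The plan is to derive an Abel-type representation of the period $T(h)$ in terms of the oval width $X(h):= x_+(h)-x_-(h) = x_+(h)-\sigma(x_+(h))$, invert it using the semigroup property of the Riemann-Liouville half-integral (the tool highlighted in the abstract), and then transfer monotonicity of $T$ to monotonicity of $G/(x-\sigma)^2$ by a simple rescaling.

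I would first split the period integral \eqref{EXPT1} at $x=0$ and substitute $u=G(x)$ on each half, using that $G$ is a diffeomorphism on $(0,x_M)$ and on $(x_m,0)$. This yields
\begin{equation*}
T(h) \;=\; \sqrt{2}\int_0^h \frac{X'(u)}{\sqrt{h-u}}\,du,\qquad X(0)=0.
\end{equation*}
Multiplying by $(H-h)^{-1/2}$, integrating $h$ from $0$ to $H$, and interchanging the order of integration (the inner integral $\int_u^H(H-h)^{-1/2}(h-u)^{-1/2}\,dh$ equals $\pi$) inverts the Abel transform:
\begin{equation*}
X(H) \;=\; \frac{1}{\pi\sqrt{2}}\int_0^H \frac{T(h)}{\sqrt{H-h}}\,dh.
\end{equation*}

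Rescaling via $h=Hs$ gives
\begin{equation*}
\frac{X(H)}{\sqrt{H}} \;=\; \frac{1}{\pi\sqrt{2}}\int_0^1 \frac{T(Hs)}{\sqrt{1-s}}\,ds.
\end{equation*}
If $T$ is monotone on $(0,h_s)$ then for every fixed $s\in(0,1)$ the map $H\mapsto T(Hs)$ is monotone in the same direction; integrating against the positive kernel $(1-s)^{-1/2}$ preserves this, so $H\mapsto X(H)^2/H$, and hence $H\mapsto H/X(H)^2$, is monotone (in the opposite direction to $T$). Since $G(x_+(H))=H$ and $x_+(H)-\sigma(x_+(H))=X(H)$, that last quantity equals $G(x_+(H))/(x_+(H)-\sigma(x_+(H)))^2$; composing with the strictly increasing bijection $H\mapsto x_+(H):(0,h_s)\to(0,x_s)$ shows that $x\mapsto G(x)/(x-\sigma(x))^2$ is monotone on $(0,x_s)$, as claimed.

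The only point requiring care is the Fubini/inversion step. In the elementary case ($k=0$) both $X'(u)$ and $T(u)$ are bounded at $u=0$ and the interchange is immediate. In the nilpotent case ($k\geq 1$) the expansion $G(x)\sim\tfrac{A}{2}x^{2k+2}$ yields $X'(u)=O\bigl(u^{1/(2k+2)-1}\bigr)$ and $T(u)=O\bigl(u^{-k/(2k+2)}\bigr)$; both singularities are integrable against the Abel kernel $(H-u)^{-1/2}$, so Fubini still applies. Beyond this convergence check the argument is purely mechanical.
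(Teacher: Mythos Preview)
The paper does not supply its own proof of this lemma; it is quoted from Theorem~2.6 of \cite{FMJV1} and used only as a black box in Example~5.4 and Section~\ref{DISS}. There is therefore no in-paper argument to compare against.

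Your proof is correct and self-contained. The identity $T(h)=\sqrt{2}\int_0^h X'(u)(h-u)^{-1/2}\,du$ follows from the substitution $u=G(x)$ on each half of the oval; the Abel inversion is justified by Tonelli since $X'>0$ makes the integrand non-negative; and the rescaling $h=Hs$ cleanly transfers monotonicity from $T$ to $X(H)/\sqrt{H}$, hence to $H/X(H)^2=G(x)/(x-\sigma(x))^2$ after composing with the increasing bijection $H\mapsto x_+(H)$. Your singularity check in the nilpotent case is also accurate: $X'(u)\asymp u^{\frac{1}{2k+2}-1}$ is integrable near $0$, and $T(h)\asymp h^{-k/(2k+2)}$ has exponent in $(-\tfrac12,0)$, so both convolutions converge.

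It is worth noting that your argument is precisely an instance of the machinery the paper sets up in Section~\ref{FRACID}: writing $T=\sqrt{2\pi}\,I_0^{1/2}X'$ and $X=\tfrac{1}{\sqrt{2\pi}}\,I_0^{1/2}T$, the inversion you carry out is nothing but the semigroup relation $I_0^{1/2}\circ I_0^{1/2}=I_0^{1}$ of Lemma~\ref{FLEM} applied to $X'$. So although the paper merely cites the result, your proof is fully in its spirit.
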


In literatures it is noted in \cite{CCFD} that any given analytic
vector field of center type has a finite number of critical periods
on a period annulus contained in a compact region. To our knowledge,
consider a family of centers with critical periods, the method often
applied therein bases on proposition where the period function satisfies some
kind of Picard-Fuchs differential equation for algebraic curves
\cite{LG,FMJV}. This approach seems hard to be used in the cases of
polynomial potential $g$ with high degree or non-polynomial
settings. Cima et al.,\cite{CGS} obtain some lower bounds for the
number of critical periods of the families of potential, reversible
and Li\'{e}nard centers by perturbing linear one. Concerning the
Hamiltonian systems with separable variables \cite{MS,JVXZ}, the
results there exists at most one critical period are
proved by means of studying the period function's convexity.
Ma\~{n}osas and Villadelprat \cite{FMJV1} propose a criterion to
bound the number of critical periods, while the criterion function
is defined recursively which increases computational complexity
rapidly.

Theorem \ref{PFCP}(2) proposes an efficient
criterion for (sharp) upper bound to the number of critical periods,
which can be reduced to counting the number of zeros of balance
function. If potential $g$ is polynomial, it reduces to solving the
following semi-algebraic system (SAS for short).
\begin{equation}\label{SAS}
G(x)=G(z),\;\delta(x)=\delta(z)
\end{equation} in the domain
\[0<x<X_M,\;x_m<z<0.\]
By SAS, we mean the system consists of polynomial equations,
polynomial inequations and inequalities \cite{YL}. There are various
problems in both practice and theory which need to solving a
semi-algebraic system. A SAS formally can be represented as $[F, N,
P, H](U, X)$ where $U=(u_1,u_2,\cdots,u_d)$ is vector of parameters,
and $X=(x_1,x_2,\cdots,x_n)$ is vector of variables. $F,N,P$ and $H$
stands for the equations $[f_1,f_2,\cdots,f_s]$, non-negative
inequalities $[g_1,g_2,\cdots,g_r]$, positive inequalities
$[g_{r+1},\cdots,g_t]$ and inequations $[h_1,h_2,\cdots,h_m]$,
respectively, where $f_i,g_j,h_k\in\,Z(U)[X]$ are all polynomials
with integer coefficients with subordinates $d,r,t,m\geq\,0$ and
$n,s\geq\,1$. If $d=0$, the SAS is called constant. If $d>0$, the
SAS is called parametric, see Appendix A of \cite{JW1} for more
details.

In the viewpoint of symbolic computation, how to isolate and count
the distinct roots of a SAS are two crucial issues. By using command
{\bf RealRootClassification} of the package {\bf
SemiAlgebraicSetTools} in computer algebra system Maple 13 or its
higher versions, one can solve the SAS with parameters, Parametric
SAS for short. {\bf RealRootClassification} computes conditions on
the parameters for the system to have a given number of real
solutions. The algorithm implemented by Maple is described in
\cite{YLHXXB}. The algorithm is complete in theory, however, the
command {\bf RealRootClassification} may quit in some case due to
too heavy computation. If the parameters are assigned, the constant
SAS can be easily solved by the command {\bf RealRootCounting} of
the package {\bf SemiAlgebraicSetTools} which returns the number of
distinct real solutions. The corresponding algorithm is described in
the paper coauthored by Xia and Hou \cite{XBHX}. In order to know
exactly the distinct real roots, the {\bf RealRootIsolate} command
returns a list of boxes. Each box isolates exactly one real root of
semi-algebraic system. All the calling sequence, descriptions and
examples of command can be viewed by consulting ``?Command name" in
programme interface of Maple.

Alternatively one can seek some (explicit) conditions on $g$ so that
$\mathcal{B}_{\sigma}(\delta)(x)$ is non-vanishing or has zeros. To
this end Rolle's Theorem or Intermediate-value Theorem are often
applied to count the number of zeros of univariate functions. To
solve the SAS \eqref{SAS} in a geometric way, it need to count the
number of intersections of two planar smooth curves, thus the following
generalized Rolle's Theorem (See Theorem 6.11 in \cite{HZCR}
for instance) is useful.

\begin{theorem}\label{GRTHEM}
(Generalized Rolle's Theorem.) Let $D=(x_l, x_r)\times(y_b, y_t)$.
Let $F(x,y),G(x,y)$ be two functions continuous on closure $\bar{D}$
and smooth in $D$. Suppose that $F^{\prime}_x(x,y)\neq\,0,\;
F^{\prime}_y(x,y)\neq\,0$ in $D$. Consider the number of solution of
system
\[F(x,y)=0,\; G(x,y)=0,\quad (x,y)\in\,D.\]
Denote the number by $\sharp\{(F,\,G)|(x,y)\in\,D\}$ and let the
Jacobian
\[J[F,G](x,y)=F^{\prime}_x(x,y)G^{\prime}_y(x,y)-F^{\prime}_y(x,y)G^{\prime}_x(x,y),\]
then
\[\sharp\{(F,G)|(x,y)\in\,D\}\leq\,1+\sharp\{(F,J[F,G])|(x,y)\in\,D\}.\]
\end{theorem}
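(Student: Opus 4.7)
The plan is to reduce the statement to classical Rolle's theorem applied to the restriction of $G$ to the zero set of $F$ in $D$. Since $F'_y$ is continuous and nonvanishing on the connected rectangle $D$, it has constant sign and $y\mapsto F(x,y)$ is strictly monotone for each fixed $x$. Thus the equation $F(x,y)=0$ has at most one solution $y\in(y_b,y_t)$ for each $x$, and by the implicit function theorem the set $\Omega$ of those $x\in(x_l,x_r)$ for which such a solution exists is open and carries a smooth function $\varphi:\Omega\to(y_b,y_t)$ with $F(x,\varphi(x))\equiv 0$ and $\varphi'(x)=-F'_x/F'_y$ evaluated on the graph.

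Next I would show that $\Omega$ is a single open interval; this is where the hypothesis $F'_x\neq 0$ enters. Since $F'_x$ is also of constant sign, $\varphi$ is strictly monotone on each connected component of $\Omega$. Suppose for contradiction that $\Omega$ had two adjacent components $(a_1,b_1)<(a_2,b_2)$. As $x\to b_1^-$ the graph must exit $D$ through either $y=y_b$ or $y=y_t$; in the former case continuity forces $F(b_1,y_b)=0$, and then $F(x,y_b)$ is strictly nonzero for $x>b_1$ by monotonicity in $x$. The monotonicity in $y$ propagates this sign to the whole segment $\{x\}\times(y_b,y_t)$, so no zero of $F$ can occur in $D$ for $x>b_1$, contradicting the existence of the component $(a_2,b_2)\subset\Omega$. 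The remaining sign-and-exit combinations are handled in exactly the same way.

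With $\Omega$ a single interval, set $g(x):=G(x,\varphi(x))$. Every common zero of $F$ and $G$ in $D$ lies on the graph of $\varphi$, so the common zeros are in bijection with the zeros of $g$ on $\Omega$. If $g$ has $N$ zeros, classical Rolle produces at least $N-1$ interior zeros of $g'$. A direct computation using the chain rule gives
\[g'(x)=G'_x(x,\varphi(x))+G'_y(x,\varphi(x))\varphi'(x)=-\frac{J[F,G](x,\varphi(x))}{F'_y(x,\varphi(x))},\]
and since $F'_y$ does not vanish on the graph, zeros of $g'$ correspond bijectively with common zeros of $F$ and $J[F,G]$ in $D$. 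Therefore $N-1\leq \sharp\{(F,J[F,G])\mid(x,y)\in D\}$, which is the claimed inequality.

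The main obstacle of this plan is Step~2, the connectedness of $\Omega$: a na\"ive component-by-component application of Rolle would only give $N\leq c+\sharp\{(F,J[F,G])\}$ with $c$ the number of components of $\Omega$ containing a zero of $g$, and it is precisely the hypothesis $F'_x\neq 0$ that forces $c\leq 1$, turning this into the sharp additive~$1$ in the statement. All remaining steps are routine applications of the implicit function theorem, the chain rule, and Rolle's theorem.
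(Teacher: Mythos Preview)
The paper does not actually prove this theorem: it is quoted verbatim as a known tool with a reference to Theorem~6.11 of \cite{HZCR}, and no argument is supplied. So there is no ``paper's own proof'' to compare against; your proposal stands on its own as a proof of the cited result.

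Your approach---parametrize the zero set of $F$ as a graph $y=\varphi(x)$ via the implicit function theorem, reduce to the one-variable function $g(x)=G(x,\varphi(x))$, apply classical Rolle, and identify $g'$ with $-J[F,G]/F'_y$ on the graph---is exactly the standard way this lemma is established, and the overall architecture is sound. One small technical point in Step~2: you invoke monotonicity of $x\mapsto F(x,y_b)$ along the boundary segment $y=y_b$, but the hypotheses only guarantee $F'_x\neq 0$ in the \emph{open} rectangle $D$, with mere continuity on $\bar D$. This is easy to repair: fix any interior height $y_0\in(y_b,y_t)$; since $\varphi(x)\to y_b$ as $x\to b_1^-$, eventually $\varphi(x)<y_0$ and hence $F(x,y_0)>0$ (using the sign of $F'_y$), so by continuity $F(b_1,y_0)\ge 0$, and since $b_1\notin\Omega$ in fact $F(b_1,y_0)>0$. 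Then interior monotonicity in $x$ gives $F(x,y_0)>0$ for all $x>b_1$, contradicting the existence of a second component. With that adjustment the argument is complete.
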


The rest part of paper is organized as follows. In section \ref{FRACID} we consider the properties of (Riemann-Liouvile) fractional derivative with which to prove Theorem \ref{PFCP}(2), where properties of Variation Diminishing of Abel's integral operator or fractional calculus with the order $\frac{1}{2}$ is given. This is the highlight of present paper. In Section
\ref{ASYTH} some asymptotical results of period function near the
center and the outer boundary are proposed, together with Theorem
\ref{PFCP}, the global analytical behaviors of period function can
be determined. The proof of main results of present paper, Theorem
\ref{PFCP} is given in Section \ref{PHMR}, the main idea is to
deduce the formula \eqref{EXPTD13} about $T^{\prime}(h)$ which is formulated as the integral of Balance function with order $\frac{1}{2}$.
As the applications of Theorem \ref{PFCP}, in Section \ref{PFRZ}
some known results are proved in more efficient way. To the best of
our knowledges, there are few works on existence of multiple critical
periods in literatures. The period function of some hyperelliptic
Hamiltonians of degree five with complex critical points are
considered, it is proved the system can have two critical periods.
Some comments are stated in Section \ref{DISS}.

\section{Fractional calculus and properties of Variation Diminishing of Abel's integral operator}\label{FRACID}
On account of the upper bound to the number of critical periods of
potential center \eqref{SN}, Theorem 3.7 in \cite{LYXZ} takes
advantage of Zhang's differential operator to deduce the higher
order derivative of period function. Theorem A of \cite{FMJV1} bases
on the Chebyshev properties of the tuple of criterion functions
defined recursively. In this section we intend to consider an
integral operator with fractional order with which to prove Theorem \ref{PFCP}(2).

Recall that in studying problem of Isochrone N. H. Abel at 1823 derived Abel's integral equation
\begin{equation}\label{INTOP}
A(k)(h):=\frac{1}{\Gamma{(\frac{1}{2}})}\int_0^h\frac{k(x)}{\sqrt{h-x}}dx,\quad \forall
k\in\,C^0[0,x_M].
\end{equation}
where $A(k)(h)$ denotes the Riemann-Liouville integral of function $k(x)$ with order $\frac{1}{2}.$
In the viewpoint of inverse problem, it is a singular Volterra integral equation of convolution type with weakly singular integral kernel $\frac{1}{\sqrt{h-x}}$. If free term $A(k)(h)$ is continuously differentiable and vanishes at $h=0$ then
equation \eqref{INTOP} has exactly one continuous solution formed as
\begin{equation}\label{SOLN}
k(x)=\frac{1}{\Gamma{(\frac{1}{2}})}\cdot\frac{d}{dx}\int_0^x\frac{A(k)(t)}{\sqrt{x-t}}dt=\frac{1}{\sqrt{\pi}}\big[\frac{A(k)(0)}{\sqrt{x}}+\int_0^x\frac{A(k)_t^{\prime}(t)}{\sqrt{x-t}}dt\big],\;x\in[0,x_M).
\end{equation}
Note that $A(k)(h)$ vanishes at $h=0$ is the necessary condition to
the existence of solution \eqref{SOLN} at $x=0$.

Fractional calculus \cite{OS} used to denote calculus of non-integer order which is in contrast to the classical calculus as created independently by Newton and Leibniz. We recall the definitions of fractional integral and derivative as follows \cite{OS, IP}.

\begin{definition}
Suppose $\alpha>0$ and $f(x)\in\,L^1(0,+\infty)$. Then for all $x>0$ we call
\begin{equation}\label{RLI}
I_{0}^{\alpha}(f)(x)=\frac{1}{\Gamma(\alpha)}\int_0^x\frac{f(s)}{(x-s)^{1-\alpha}}ds
\end{equation}
the (below) Riemann-Liouville integral of $f$ of order $\alpha$.
\end{definition}

\begin{definition}
Suppose $\alpha>0$ and suffices $n-1\leq\,\alpha<n$ where $n$ is a positive integer. if $f(x)\in\,C^n(R)$ we call
\begin{equation}\label{RLD}
D_{0}^{\alpha}(f)(x)=\frac{1}{\Gamma(n-\alpha)}\frac{d^n}{dx^n}\int_0^x\frac{f(s)}{(x-s)^{\alpha-n+1}}ds
\end{equation}
the (below) Riemann-Liouville derivative of $f$ of order $\alpha$.
\end{definition}

Remark that the order of fractional differ-integration can be rational, irrational or complex number. When order $\alpha$ is a positive integer, the differ-integration \eqref{RLI} and \eqref{RLD} are in accordance with the classical case. Noting that fractional derivatives are non-local in contrast to the classical
derivative. For example, the fractional derivative of constant function is not zero. Riemann--Liouville derivative depends on a free parameter which relies on global information of the function. Nevertheless the (below) Riemann-Liouville fractional differential operator still satisfies the following classical proposition \cite{IP}.
\begin{lemma}\label{FLEM}
The (below) Riemann-Liouville fractional integral operator is commutable, suppose that function $f$ is continuous in $(a,b)$ for any $\mu,\nu>0$ there holds
\[I_{a}^{\mu}I_{a}^{\nu}f(x)=I_{a}^{\mu+\nu}f(x)=I_{a}^{\nu}I_{a}^{\mu}f(x).\]
\end{lemma}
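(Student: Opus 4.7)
The plan is to prove the semigroup identity by direct computation, expanding the nested integral with the definition \eqref{RLI} and collapsing it via Fubini's theorem together with a Beta function identity. First I would write
\[
I_a^\mu I_a^\nu f(x) = \frac{1}{\Gamma(\mu)\Gamma(\nu)} \int_a^x (x-s)^{\mu-1} \int_a^s (s-t)^{\nu-1} f(t)\, dt\, ds,
\]
and note that since $f$ is continuous on $(a,b)$ and $\mu, \nu > 0$, the combined integrand $(x-s)^{\mu-1}(s-t)^{\nu-1} f(t)$ is absolutely integrable over the triangle $\{(s,t) : a < t < s < x\}$, so Fubini's theorem applies and permits interchanging the order of integration to obtain
\[
I_a^\mu I_a^\nu f(x) = \frac{1}{\Gamma(\mu)\Gamma(\nu)} \int_a^x f(t) \left[ \int_t^x (x-s)^{\mu-1} (s-t)^{\nu-1} ds \right] dt.
\]

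Next I would evaluate the inner integral by the substitution $s = t + (x-t)u$ with $u \in [0,1]$, which turns $(x-s) = (x-t)(1-u)$ and $(s-t) = (x-t)u$. The inner integral becomes
\[
(x-t)^{\mu+\nu-1} \int_0^1 (1-u)^{\mu-1} u^{\nu-1}\, du = (x-t)^{\mu+\nu-1}\, B(\mu,\nu),
\]
and then the classical identity $B(\mu,\nu) = \Gamma(\mu)\Gamma(\nu)/\Gamma(\mu+\nu)$ yields
\[
I_a^\mu I_a^\nu f(x) = \frac{1}{\Gamma(\mu+\nu)} \int_a^x (x-t)^{\mu+\nu-1} f(t)\, dt = I_a^{\mu+\nu} f(x).
\]
The second equality $I_a^\nu I_a^\mu f = I_a^{\mu+\nu} f$ follows either by symmetry (swapping the roles of $\mu$ and $\nu$ in the argument above) or immediately from the formula just proved, giving commutativity as a consequence.

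The only subtle point is the justification of Fubini, since the kernel is weakly singular at both $s=x$ and $s=t$; but both exponents $\mu-1, \nu-1$ exceed $-1$, so the iterated absolute integral is finite on any compact subinterval of $(a,b)$, and no genuine obstacle arises. The remaining steps are routine manipulations of the Beta and Gamma functions.
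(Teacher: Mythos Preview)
Your proof is correct and follows essentially the same route as the paper: write out the iterated integral, interchange the order of integration, and collapse the inner integral to a Beta function via the linear substitution $u=(s-t)/(x-t)$, then apply $B(\mu,\nu)=\Gamma(\mu)\Gamma(\nu)/\Gamma(\mu+\nu)$. The only difference is that you are more explicit about the Fubini justification, which the paper simply asserts.
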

\begin{proof}
By Definition \ref{RLI} and change the order of double integral we get
\begin{eqnarray*}
I_{a}^{\mu}I_{a}^{\nu}(f)(t)&=&\frac{1}{\Gamma(\mu)\Gamma(\nu)}\int_a^t\,(t-\tau)^{\mu-1}\int_a^\tau\,f(s)(\tau-s)^{\nu-1}dsd\tau\\
                            &=&\frac{1}{\Gamma(\mu)\Gamma(\nu)}\int_a^t\,f(s)\int_s^t\,(t-\tau)^{\mu-1}(\tau-s)^{\nu-1}d\tau\,ds
\end{eqnarray*}
By applying change of coordinates $\xi=(\tau-s)/(t-s)$ and proposition of Beta function, furthermore we obtain
\begin{eqnarray*}
I_{a}^{\mu}I_{a}^{\nu}(f)(t)&=&\frac{1}{\Gamma(\mu)\Gamma(\nu)}\int_a^t\,(t-s)^{\mu+\nu-1}f(s)\int_0^1\,(1-\xi)^{\mu-1}\xi^{\nu-1}d\xi\,ds\\
                            &=&\frac{\mathbf{B}(\mu,\nu)}{\Gamma(\mu)\Gamma(\nu)}\int_a^t\,(t-s)^{\mu+\nu-1}f(s)\,ds\\
                            &=&I_{a}^{\mu+\nu}f(t).
\end{eqnarray*}
where $\Gamma(\cdot),\,\mathbf{B}(\cdot,\cdot)$ represent gamma function and Beta function, respectively.
\end{proof}

As in classical calculus, we have the equality $D_a^\alpha[{}I_a^\alpha\,f(x)]=f(x)$ for any $\alpha>0$ as well,  namely the derivative operator $D_a^\alpha$ is the left inverse of $I_a^\alpha$. It is obvious $I_a^\alpha$ is a positive linear operator and the set $\{I_a^\alpha,\alpha>0\}$ is formed as strongly continuous semi-group with respect to $\alpha$.

From the Lemma \ref{FLEM} we assert the following results.

\begin{theorem}\label{PVD}
Abel's integral operator \eqref{INTOP} is Variation Diminishing. Suppose that function $k\in\,L^1(0,h_s)$,
then the number of zeros of function $k$ in $(0,h_s)$ is an upper bound to that of image function $A(k)(h)$ (i.e.,the Riemann-Liouville integral of $k(x)$ with order $\frac{1}{2}$); moreover the bound is sharp when the primitive function of $k$ has the same number of zeros as $k$ in $(0,h_s)$.
\end{theorem}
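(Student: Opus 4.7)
The strategy is to combine the semigroup identity of Lemma \ref{FLEM} with Rolle's theorem, bootstrapping from a ``double-application" bound to the single-application bound in the theorem. By Lemma \ref{FLEM}, $A^{2}(k) = I_{0}^{1}(k) = K$, where $K(h) = \int_{0}^{h} k(s)\,ds$ is the primitive of $k$ satisfying $K(0) = 0$ and $K' = k$. Writing $Z(f)$ for the number of zeros of $f$ in $(0, h_{s})$, classical Rolle applied to $K$ (counting the zero at $h = 0$) immediately gives $Z(K) \leq Z(k)$.

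To deduce the desired $Z(A(k)) \leq Z(k)$, I would induct on $Z(k)$. The base case $Z(k) = 0$ is immediate: $k$ has constant sign, and since the Abel kernel $(h-s)^{-1/2}$ is strictly positive, so does $A(k)$, giving $Z(A(k)) = 0$. For the inductive step, suppose $A(k)$ has $N$ zeros $0 < h_{1} < \cdots < h_{N} < h_{s}$; together with the automatic zero $A(k)(0) = 0$ this yields $N + 1$ zeros in $[0, h_{s})$, so Rolle produces at least $N$ zeros of $(A(k))'$ in $(0, h_{s})$. Integration by parts in \eqref{INTOP} gives
\[
(A(k))'(h) = A(k')(h) + \frac{k(0)}{\sqrt{\pi\,h}},
\]
and combining this identity with the inductive hypothesis applied to an auxiliary function (such as $x\,k'(x)$, which appears naturally when differentiating the equivalent representation $A(k)(h)/\sqrt{h} = \pi^{-1/2}\int_{0}^{1} k(hu)(1-u)^{-1/2}\,du$, giving $\phi'(h) = h^{-3/2} A(x\,k'(x))(h)$) should force $k$ to have at least $N$ zeros, completing the induction.

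The sharpness assertion follows by applying the inequality to the function $A(k)$ itself: under the hypothesis $Z(K) = Z(k)$, the chain $Z(k) = Z(K) = Z(A^{2}(k)) \leq Z(A(k)) \leq Z(k)$ collapses to equality throughout, giving $Z(A(k)) = Z(k)$. The principal obstacle is closing the inductive step: the boundary term $k(0)/\sqrt{\pi\,h}$ is singular at $h = 0^{+}$, and translating zero counts from $(A(k))'$ back to $k$ via the auxiliary function requires careful choice and bookkeeping. This is where the full power of the Riemann--Liouville calculus (the semigroup property of Lemma \ref{FLEM} together with the positivity of the Abel kernel) must be combined with repeated application of Rolle's theorem.
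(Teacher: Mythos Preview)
Your use of the semigroup identity $A^{2}(k)=I_{0}^{1}(k)=K$ together with Rolle's theorem, and your sharpness chain $Z(k)=Z(K)=Z(A^{2}(k))\leq Z(A(k))\leq Z(k)$, are exactly what the paper does; that part matches.

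The gap is in your main inequality $Z(A(k))\leq Z(k)$. You set up an induction on $Z(k)$, but the inductive step is not carried out: you write that the identity $(A(k))'(h)=A(k')(h)+k(0)/\sqrt{\pi h}$ together with an auxiliary function such as $x\,k'(x)$ ``should force $k$ to have at least $N$ zeros'', and then immediately concede that ``the principal obstacle is closing the inductive step''. As written there is no mechanism that lowers the induction parameter: passing from $k$ to $k'$ or to $x\,k'(x)$ does not in general decrease the zero count, and the singular boundary term you flag genuinely obstructs a clean Rolle argument on $(A(k))'$. So this is a proposal for a route, not a proof.

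The paper takes a much shorter path and avoids differentiating $A(k)$ altogether. It argues by contradiction: if $Z(A(k))\geq Z(k)+1$, then a second application of $A$ gives $Z(K)=Z(A^{2}(k))\geq Z(k)+2$, which contradicts the Rolle bound $Z(K)\leq Z(k)$ you already established. This bypasses the induction, the derivative formula, and the auxiliary functions entirely. One may note that the step ``a second application of $A$ gains another zero'' is itself an instance of the claim being proved (now with $A(k)$ in place of $k$), so the paper's contradiction argument also deserves scrutiny; but the intended mechanism is to route everything through the composite $A\circ A=I_{0}^{1}$ and a single use of Rolle on the primitive $K$, rather than through your more elaborate differentiation-and-induction scheme.
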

\begin{proof}
We prove the conclusion by contradiction. Denote the number of zeros of function $f$ in the interval $(a,b)$ by
$\sharp\,f|_{(a.b)}.$ Assume that
\[\sharp\,A(k)|_{(0,h_s)}>\sharp\,k|_{(0,h_s)}. \]
Without of generality say $\sharp\,A(k)|_{(0,h_s)}=\sharp\,k|_{(0,h_s)}+1.$ Note that $I_0^1(k)=A\circ\,A(k)$ is the primitive function of $k$.
By assumption we have
\[\sharp\,I_0^1(k)|_{(0,h_s)}\geq\sharp\,k|_{(0,h_s)}+2,\]
which contradicts the classical Rolle's Theorem. It leads to the first conclusion.

From the above discussions, we have
\[\sharp\,I_0^1(k)|_{(0,h_s)}\leq\,\sharp\,A(k)|_{(0,h_s)}\leq\sharp\,k|_{(0,h_s)},\]
The second result follows.
\end{proof}

\section{The asymptotical results of period function near the
boundary}\label{ASYTH}

Note that Theorem \ref{PFDISCR}(2) proposes an practical approach to
bound the number of critical periods. If taking into account the
monotonicity of period function near the two endpoints of interval
$(0,h_s)$, one can determine analytical behaviors of $T(h)$ globally
in $(0,\,h_s)$. On the Poincar\'{e}-Lyapunov disc, the period
annulus of center has two connect components, the inner one is
center itself and the outer boundary is a polycycle.

The first terms of the Taylor expansions of period function near the
nondegenerate center is well known, see Proposition 3.5 in
\cite{LYXZ} and formulas (25a) and (25b) of \cite{RF}, Theorem 4.2
\cite{CCFD} for instance. For completeness we state as follows.
\begin{lemma}\label{PHDPHO}
Assume function $g(x)$ of potential system \eqref{SN} is analytic
and satisfies the Assumption \ref{PROPG}.

If $k=0$, then period function \eqref{EXPT1} is analytic in
$(0,h_s)$ and can be analytically extended to $h=0$. Moreover
function $T(h)$ and its derivation $T^{\prime}(h)$ as
$h\rightarrow\,0^+$ satisfy the following limits
\begin{equation}
         T(0^+)=\frac{2\pi}{\sqrt{g^{\prime}(0)}},\;
T^{\prime}(0^+)=\pi\frac{{5(g^{\prime\prime}(x))^2-3\,g^{\prime}(x)g^{\prime\prime\prime}(x)}}{12(g^{\prime}(x))^{\frac{7}{2}}}|_{x=0}.
\end{equation}
If $k\geq\,1$, then
\begin{equation}
         T(0^+)=\lim\limits_{h\rightarrow\,0^+}h^{-\frac{k}{2k+2}}\lambda_k=+\infty,\quad T^{\prime}(0^+)=\lim\limits_{h\rightarrow\,0^+}-\frac{k\lambda_kh^{-\frac{3k+2}{2k+2}}}{2k+2}=-\infty.
\end{equation}
where $\lambda_k$ is a positive constant.
\end{lemma}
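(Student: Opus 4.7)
The plan is to make an analytic change of variable that ``straightens'' the potential near the origin and thereby reduces the period integral to an explicit form whose behavior as $h\to 0^+$ can be read off directly. The two cases $k=0$ and $k\geq 1$ are handled in parallel, differing only in how the local vanishing order of $G$ is absorbed into the new variable.

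For $k=0$ I would invoke the analytic Morse lemma: since $G(0)=G'(0)=0$ and $G''(0)=g'(0)>0$, there is an analytic diffeomorphism $x\mapsto u$ near the origin satisfying $G(x)=u^2/2$, with $u$ having the same sign as $x$ and $u'(0)=\sqrt{g'(0)}$. Substituting into \eqref{EXPT1} and then setting $u=\sqrt{2h}\sin\phi$ turns the period function into
\[
T(h)=2\int_{-\pi/2}^{\pi/2}\tfrac{dx}{du}\!\left(\sqrt{2h}\sin\phi\right)d\phi.
\]
Expanding $\tfrac{dx}{du}$ as a Taylor series in $u$ and using parity (only even powers of $\sin\phi$ contribute) shows that $T$ is analytic at $h=0$. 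Then $T(0^+)$ and $T'(0^+)$ come from the first two nonzero coefficients of this Taylor series, which I would obtain by iteratively differentiating the identity $g(x)\tfrac{dx}{du}=u$ at $u=0$; combined with $\int_{-\pi/2}^{\pi/2}d\phi=\pi$ and $\int_{-\pi/2}^{\pi/2}\sin^2\phi\,d\phi=\pi/2$, this reproduces the stated formulas.

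For $k\geq 1$ the appropriate normalization is $G(x)=\alpha u^{2k+2}$ with $\alpha=g^{(2k+1)}(0)/(2k+2)!>0$. This is again an analytic change of coordinates with $u'(0)=1$, because $G(x)^{1/(2k+2)}$ extends analytically to a neighborhood of $0$ by the analytic implicit function theorem. After substitution and the further rescaling $u=(h/\alpha)^{1/(2k+2)}\tau$, the period function factors as
\[
T(h)=\sqrt{2}\,\alpha^{-1/(2k+2)}\,h^{-k/(2k+2)}\,F(h),\qquad F(h):=\int_{-1}^{1}\frac{\tfrac{dx}{du}\!\left((h/\alpha)^{1/(2k+2)}\tau\right)}{\sqrt{1-\tau^{2k+2}}}\,d\tau.
\]
As $h\to 0^+$ the integrand converges pointwise to $(1-\tau^{2k+2})^{-1/2}$ (since $\tfrac{dx}{du}(0)=1$) and is bounded above by an $L^1$ function on $[-1,1]$, so $F(h)\to F(0)>0$, giving $T(h)\sim \lambda_k h^{-k/(2k+2)}$ with $\lambda_k=\sqrt{2}\alpha^{-1/(2k+2)}F(0)>0$ and hence $T(0^+)=+\infty$. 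Differentiating the above factorization and using that $F'(h)$ remains bounded (or at worst grows slower than $h^{-1}$) then yields $T'(h)\sim -\tfrac{k}{2k+2}\lambda_k h^{-(3k+2)/(2k+2)}\to -\infty$.

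The step I expect to require the most care is the regularity analysis of $F(h)$ and $F'(h)$ near $h=0$, so that the subleading contributions from the Taylor expansion of $\tfrac{dx}{du}$ do not disturb the leading asymptotics of either $T(h)$ or $T'(h)$. This should follow from a dominated-convergence argument leveraging the $L^1$ integrability of $(1-\tau^{2k+2})^{-1/2}$ on $[-1,1]$ and the smoothness of $\tfrac{dx}{du}$ near the origin, which simultaneously legitimizes differentiation under the integral sign.
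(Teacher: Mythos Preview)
The paper does not actually prove this lemma: it is stated ``for completeness'' with citations to Proposition~3.5 of \cite{LYXZ}, formulas (25a)--(25b) of \cite{RF}, and Theorem~4.2 of \cite{CCFD}, and no argument is supplied. So there is no in-paper proof to compare against; your proposal is a genuine, self-contained derivation where the paper offers only references.

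Your argument is correct. The Morse-lemma normalization $G(x)=u^2/2$ for $k=0$ and the analogous $(2k+2)$-th-power normalization $G(x)=\alpha u^{2k+2}$ for $k\ge 1$ are exactly the right moves, and the trigonometric substitution that follows is standard. Two remarks on the step you flagged as delicate. First, for $k\ge 1$, the claim that $F'(h)$ grows slower than $h^{-1}$ is true, but the mechanism is worth making explicit: expanding $\tfrac{dx}{du}(v)=1+a_1v+a_2v^2+\cdots$ and noting that the odd powers of $\tau$ integrate to zero against $(1-\tau^{2k+2})^{-1/2}$, one sees $F(h)=c_0+O(h^{1/(k+1)})$ and hence $F'(h)=O(h^{-k/(k+1)})$, which is indeed $o(h^{-1})$ since $k/(k+1)<1$. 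Without invoking this parity cancellation the naive bound on $F'$ would be $O(h^{-(2k+1)/(2k+2)})$, still $o(h^{-1})$, so the conclusion survives either way. Second, it is worth observing that the change of variable you use is essentially the same as the paper's transformation \eqref{TRANS}, $G(x)=h\sin^{2k+2}\theta$; your factorization $T(h)=\lambda_k h^{-k/(2k+2)}F(h)$ is what one would obtain by taking $h\to 0$ in the paper's formula \eqref{EXP2}.
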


Concerning the limit of the period function $T(h)$ as $h$ tends to energy level $h_s$ corresponding to the outer boundary of
period annulus $\Gamma_s$, we discuss in two cases.

If $h_s<+\infty$ which means the outer boundary $\Gamma_s$ is a
polycycle connecting to some singularities. Let us mention a well
known result in the following lemma, the proof is referred to Lemma
5.1 of \cite{LYXZ}.
\begin{lemma}\label{PHPCL1}
If potential $g$ of system \eqref{SN} is polynomial and satisfies
Assumption 1.1, in addition it exists some other isolated zeros,
then the period annulus is bounded. The period of center $O(0,0)$ is
divergent to infinity near the outer boundary.
\end{lemma}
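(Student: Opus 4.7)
The plan is to establish the two assertions separately: (i) boundedness of the period annulus $\mathcal{P}$, and (ii) divergence $T(h)\to+\infty$ as $h\to h_s^-$.

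For (i), by Assumption~\ref{PROPG} we have $xg(x)>0$ on $(x_m,x_M)\setminus\{0\}$, and the excerpt records (just before \eqref{DELWB}) that the outer boundary of $\mathcal{P}$ must contain at least one singular point, i.e.\ $g(x_m)g(x_M)=0$. Since the polynomial $g$ has isolated zeros beyond the origin by hypothesis, the smallest positive zero $x_*^+$ and the largest negative zero $x_*^-$ exist and are finite. The sign condition forces $0<x_M\le x_*^+<+\infty$ and $-\infty<x_*^-\le x_m<0$, so $(x_m,x_M)$ is bounded. Combined with $G(x_m)=G(x_M)=h_s$, this gives $h_s<+\infty$, and $\mathcal{P}$ is bounded.

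For (ii), without loss of generality assume $g(x_M)=0$ (the case $g(x_m)=0$ is symmetric, and when both hold the two contributions only reinforce each other). Writing $g(x)=(x-x_M)^n\tilde h(x)$ near $x_M$ with $\tilde h(x_M)\ne 0$, the condition $g>0$ on $(0,x_M)$ forces $(-1)^n\tilde h(x_M)>0$, so Taylor expansion gives
\[
h_s-G(x)=\int_{x}^{x_M} g(s)\,ds = \frac{(-1)^n\tilde h(x_M)}{n+1}(x_M-x)^{n+1}+O\bigl((x_M-x)^{n+2}\bigr),
\]
which is positive for $x$ just below $x_M$. I would split the period integral
\[
T(h)=\sqrt{2}\int_{x_-(h)}^{x_+(h)}\frac{dx}{\sqrt{h-G(x)}}
\]
into a compact piece $[x_m+\varepsilon,x_M-\varepsilon]$, bounded uniformly for $h$ near $h_s$, plus a small neighborhood of $x_M$. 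On the local piece, setting $u=x_M-x$ and $\epsilon=h_s-h>0$, the integrand is $(-\epsilon+C u^{n+1}+O(u^{n+2}))^{-1/2}$ with $C>0$. For the hyperbolic case $n=1$ (i.e.\ $g'(x_M)<0$), the substitution $u=\sqrt{2\epsilon/|g'(x_M)|}\,\cosh\theta$ reduces the local integral to $\sqrt{2/|g'(x_M)|}\,\theta\big|$, yielding a logarithmic divergence $\sim \frac{1}{\sqrt{|g'(x_M)|}}|\ln(h_s-h)|$. For the degenerate case $n\ge 3$ (odd), the rescaling $u=\epsilon^{1/(n+1)}v$ shows the local contribution scales as $\epsilon^{(1-n)/(2(n+1))}\to+\infty$. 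In every case, $T(h)\to+\infty$ as $h\to h_s^-$.

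The only real technical step is the local asymptotic analysis near $(x_M,0)$; once the leading behaviour of $G$ at the boundary equilibrium is pinned down by the Taylor expansion, the divergence of $T(h)$ follows by a routine Laplace-type estimate for weakly singular integrals. This recovers the classical fact that the period along a homoclinic/heteroclinic connection to a saddle is infinite, and as the excerpt notes, the argument is precisely that of Lemma~5.1 of \cite{LYXZ}, which may be cited directly to avoid repeating the computation.
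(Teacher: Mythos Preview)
The paper does not actually prove this lemma: it states the result as ``well known'' and refers the reader to Lemma~5.1 of \cite{LYXZ} for the argument. Your proposal therefore goes beyond what the paper does, supplying a self-contained sketch, and you correctly note at the end that one may simply invoke the cited reference.

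Your outline is sound in structure, but two small points deserve tightening. First, in part~(i) you assert that both the smallest positive zero $x_*^+$ and the largest negative zero $x_*^-$ of $g$ exist; the hypothesis only guarantees that $g$ has \emph{some} other zero, which may lie on one side of the origin only. This is easily repaired: if, say, only $x_*^+$ exists, the sign condition still forces $x_M\le x_*^+<\infty$, hence $h_s=G(x_M)<\infty$; then $x_m$ is the unique negative solution of $G(x)=h_s$, which is finite because for a nonconstant polynomial $g$ with $g<0$ on $(-\infty,0)$ one has $G(x)\to+\infty$ as $x\to-\infty$. Second, in part~(ii) you restrict the boundary multiplicity to $n=1$ or ``$n\ge 3$ (odd)'', but even multiplicities (cusp-type equilibria on the polycycle) are not excluded by the hypotheses. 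Fortunately your rescaling estimate $\epsilon^{(1-n)/(2(n+1))}$ is valid for every integer $n\ge 1$ and still yields divergence, so the restriction is unnecessary rather than wrong.

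With these two adjustments your argument is complete and matches the classical proof that the citation points to.
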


If $h_s=+\infty$, we have the following conclusions \cite{DTHHZF}.

\begin{lemma}\label{PHPCL2}
Consider conservative second order autonomous duffing equation
$\ddot{x}+g(x)=0$, suppose that $g$ is locally Lipschitz and
$xg(x)>0$ in $(-\infty,+\infty).$ In addition if it satisfies
\begin{enumerate}
  \item [(i).]~ the superlinear condition $\lim\limits_{|x|\rightarrow\,+\infty}\frac{g(x)}{x}=+\infty,$
                 the system \eqref{SN} is said to be superlinear, then $\lim\limits_{h\rightarrow\,+\infty}T(h)=0$;
  \item [(ii).]~the sublinear condition $\lim\limits_{|x|\rightarrow\,+\infty}\frac{g(x)}{x}=0,$
                 the system \eqref{SN} is said to be sublinear, then $\lim\limits_{h\rightarrow\,+\infty}T(h)=+\infty$;
  \item [(iii).]~the semilinear condition $0<\liminf\limits_{|x|\rightarrow\,+\infty}\frac{g(x)}{x}\leq\,\limsup\limits_{|x|\rightarrow\,+\infty}\frac{g(x)}{x}<+\infty,$
                 the system \eqref{SN} is said to be semilinear, then
                 $0<\liminf\limits_{h\rightarrow\,+\infty}T(h)\leq\,\limsup\limits_{h\rightarrow\,+\infty}T(h)<+\infty.$
\end{enumerate}
\end{lemma}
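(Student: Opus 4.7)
The plan is to normalize the interval of integration in formula \eqref{EXPT1} by a rescaling depending on the turning point, so that the dependence on $h$ concentrates in a single scalar parameter $M$ and each hypothesis on $g(x)/x$ can be applied uniformly. First split
\[T(h) = \sqrt{2}\int_{x_-(h)}^{0}\frac{dx}{\sqrt{h-G(x)}} + \sqrt{2}\int_{0}^{x_+(h)}\frac{dx}{\sqrt{h-G(x)}}.\]
Since $xg(x)>0$ makes $G$ strictly monotone on each half-line, and since the period annulus is unbounded so that $x_\pm(h)\to\pm\infty$ with $G(x_\pm(h))=h$ as $h\to+\infty$, one may set $M=x_+(h)$ on the positive side and substitute $x=Mu$, $u\in(0,1)$. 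Using $G(M)-G(Mu)=M\int_u^1 g(Mv)\,dv$, the positive-half integral becomes
\[I_+(M)\;:=\;\sqrt{2M}\int_0^1\frac{du}{\sqrt{\int_u^1 g(Mv)\,dv}},\]
and an analogous $I_-(M)$ holds on the negative side.

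Next I would plug each of the three hypotheses into this formula to derive two-sided bounds on $\int_u^1 g(Mv)\,dv$ of the form $cM(1-u^2)/2$ plus lower-order corrections, after which the limit reduces to the classical $\int_0^1 du/\sqrt{1-u^2}=\pi/2$. In the superlinear case \emph{(i)}, for arbitrary $N>0$ pick $x_0=x_0(N)$ with $g(x)\geq Nx$ for $|x|>x_0$; this yields $I_\pm(M)\leq \pi/\sqrt{N}+o(1)$ as $M\to\infty$, and letting $N\to\infty$ afterwards gives $T(h)\to0$. In the sublinear case \emph{(ii)}, for $\varepsilon>0$ pick $x_0(\varepsilon)$ with $g(x)\leq\varepsilon x$ for $|x|>x_0$; the symmetric lower bound produces $I_\pm(M)\geq\pi/\sqrt{\varepsilon}-o(1)$, and $\varepsilon\to0$ yields $T(h)\to+\infty$. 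In the semilinear case \emph{(iii)}, the hypothesis gives uniform two-sided bounds $\alpha_1\leq g(x)/x\leq\beta_1$ for $|x|$ large (with $0<\alpha_1\leq\beta_1<\infty$), producing positive finite constants $c_1\leq c_2$ with $c_1+o(1)\leq I_\pm(M)\leq c_2+o(1)$, whence $0<\liminf_{h\to\infty}T(h)\leq\limsup_{h\to\infty}T(h)<\infty$.

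The main obstacle will be making the bounds on $\int_u^1 g(Mv)\,dv$ uniform at the two corners of the unit interval: near $u=1$, where the integrand carries a square-root singularity that must be controlled independently of $M$; and near $u=0$, where the ``linear core'' region $\{Mv\leq x_0\}$ lies outside the domain on which the asymptotic hypothesis on $g$ is active. In both regions the crude lower estimate $\int_u^1 g(Mv)\,dv\geq\int_{x_0/M}^1 g(Mv)\,dv$, combined with $g$ being locally Lipschitz and of constant sign, shows that the contribution of the linear core is $O(1)$ while the bulk is of order $M$; this justifies interchanging the sequential limits ``$M\to\infty$ first, then $N\to\infty$ (resp.~$\varepsilon\to0$)'' and completes the argument.
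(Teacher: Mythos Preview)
The paper does not actually prove this lemma: it is quoted from Ding--Huang--Zanolin \cite{DTHHZF}, with a further pointer to \cite{JVXZ} and \cite{LYXZ} ``for more details.'' So there is no in-paper argument to compare your proposal against.

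On its own merits, your rescaling $x=Mu$ with $M=x_+(h)$ and the identity $G(M)-G(Mu)=M\int_u^1 g(Mv)\,dv$ are sound, and the strategy of comparing $g(Mv)$ with $N\,Mv$ (superlinear), $\varepsilon\,Mv$ (sublinear), or $\alpha_1 Mv,\ \beta_1 Mv$ (semilinear) outside a fixed compact set is the standard one. The reductions you sketch go through: for the superlinear upper bound, split at $u=x_0/M$; on $[x_0/M,1]$ one has $\int_u^1 g(Mv)\,dv\ge \tfrac{N M}{2}(1-u^2)$, while on $[0,x_0/M]$ one uses $\int_u^1 g(Mv)\,dv\ge \int_{x_0/M}^1 g(Mv)\,dv\ge \tfrac{N M}{4}$ for large $M$, so that piece contributes $O(M^{-1})$ after the prefactor $\sqrt{2M}$. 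For the sublinear lower bound you can simply discard the interval $[0,x_0/M]$ (the integrand is positive) and bound $\int_u^1 g(Mv)\,dv\le \tfrac{\varepsilon M}{2}(1-u^2)$ on the rest. The semilinear case follows by combining both estimates. The only place where ``locally Lipschitz'' enters is to guarantee $g(0)=0$ and hence $|g(x)|\le L|x|$ on $[-x_0,x_0]$, which you use implicitly; it is worth stating this explicitly. With these details filled in, your argument is correct and in the same spirit as the cited references.
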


In this respect we refer the reader to see Theorem 5.1 of
\cite{JVXZ} or Theorem 1.2 of \cite{LYXZ} for more details.

\section{The proof of Theorem \ref{PFCP}}\label{PHMR}
From the expression \eqref{EXPT1} of period function $T(x)$, since
$x\rightarrow\,\frac{1}{\sqrt{h-G(x)}}$ has singularities at
$x_{-}(h)$ and $x_{+}(h)$, it is necessary to make some
transformations. Following the change in coordinates applied in
\cite{LYXZ}, let
\begin{eqnarray}\label{TRANS}
G(x)&=&r\sin^{2k+2}\theta,\;\frac{y}{\sqrt{2}}=\sqrt{r}\cos\theta,\nonumber \\
x\sin\theta&\geq&\,0,\,\theta\in[-\frac{\pi}{2},\frac{\pi}{2}],\,r\in(0,h_s).
\end{eqnarray}
Under which the periodic orbit
\[\Gamma_h:\quad \frac{y^2}{2}+G(x)=h\]
is mapped to a cycle $r=h$. Along the cycle we have
\[g(x)dx=2(k+1)h\sin^{2k+1}\theta\cos\theta\,d\theta,\]
then the period function $T(h)$ yields to
\begin{equation}\label{EXP2}
T(h)=2(k+1)\sqrt{2h}\int_{-\frac{\pi}{2}}^{\frac{\pi}{2}}\frac{\sin^{2k+1}\theta}{g(x)\sqrt{1+\sin^2\theta+\cdots+\sin^{2k}\theta}}d\theta,
\end{equation}
where $x=x(\theta,h)$ is implicitly determined by the mapping
\eqref{TRANS}, recall that formula \eqref{EXP2} is proved as the
formula (3.11) of Proposition 3.2 in \cite{LYXZ}.

On the other hand, from the mapping \eqref{TRANS}, we get
\[\frac{\partial\,x}{\partial\,h}=\frac{\sin^{2k+2}\theta}{g(x)}=\frac{G(x)}{g(x)h}.\]
Hence we get $T^{\prime}(h)$ can be represented as follows, see also
formula (3.14) of Proposition 3.3 in \cite{LYXZ}.
\begin{equation}\label{EXPTD11}
T^{\prime}(h)=(k+1)\sqrt{\frac{2}{h}}\int_{-\frac{\pi}{2}}^{\frac{\pi}{2}}\frac{\delta(x)\sin^{2k+1}\theta}{\sqrt{1+\sin^2\theta+\cdots+\sin^{2k}\theta}}\,d\theta,
\end{equation}
where
\[\delta(x)=(\frac{G}{g^2})^{\prime}(x)=\frac{g^2-2Gg^{\prime}}{g^3}(x),\quad x\in\,(x_m,x_M).\]
Note that
\[G(x)=G(\sigma(x))=h\sin^{2k+2}\theta,\;x_{-}(h)<\sigma(x)<0<x<x_{+}(h), \]
and $x\sin\theta>0$ for
$\theta\in[-\frac{\pi}{2},\frac{\pi}{2}]\setminus\{0\}$ which
implies $x(-\theta,h)=\sigma(x(\theta,h))$. It yields to
\begin{eqnarray}\label{EXPTD12}
T^{\prime}(h)&=&(k+1)\sqrt{\frac{2}{h}}(\int_{-\frac{\pi}{2}}^{0}+\int_{0}^{\frac{\pi}{2}})\frac{\delta(x)\sin^{2k+1}\theta}{\sqrt{1+\sin^2\theta+\cdots+\sin^{2k}\theta}}\,d\theta, \nonumber\\
             &=&(k+1)\sqrt{\frac{2}{h}}\int_{0}^{\frac{\pi}{2}}\frac{\mathcal{B}_{\sigma}(\delta)(x)\sin^{2k+1}\theta}{\sqrt{1+\sin^2\theta+\cdots+\sin^{2k}\theta}}\,d\theta,
\end{eqnarray}
where $x\in (0,x_M)$ and $\mathcal{B}_{\sigma}(\delta)(x)$ is the
balance of function $\delta$ with respect to the involution
$\sigma$. From the mapping $G(x)=h\sin^{2k+2}\theta,$
differentiating both sides we get
\[g(x)dx=2(k+1)h\sin^{2k+1}\theta\cos\theta\,d\theta.\]
combining with \eqref{EXPTD12}, it brings to
\begin{equation}\label{EXPTD13}
T^{\prime}(h)=\frac{1}{\sqrt{2}h}\int_{0}^{x_{+}(h)}\mathcal{B}_{\sigma}(\delta)(x)\frac{g}{\sqrt{h-G}}(x)\,dx,
\; x_{+}(h)\in (0,x_M).
\end{equation}
The expression \eqref{EXPTD13} seems not to be new, while it is an
improvement of the expression (7) of \cite{XZZJ} based on Loud's
result \cite{WSL} where an additional condition is required that the
center is elementary.

Therefore from formula \eqref{EXPTD13}, the conclusions (1),(3) of
Theorem \ref{PFDISCR} are obvious. It is crucial to derive the
second conclusion about the (sharp) upper bound to the number of
critical periods.

If the center is elementary, $T^{\prime}(h)$ is continuous
in $(0,h_s)$ and can be continuously extended to $h=0$. Moreover
$\mathcal{B}_{\sigma}(\delta)(0)=0$.

If the center is nilpotent, from formula \eqref{DELNO} then we have
$\mathcal{B}_{\sigma}(\delta)(0)=\delta(0^+)-\delta(0^-)<0$.

Concerning the expression \eqref{EXPTD13} of $T^{\prime}(h)$, we
have $\frac{\partial\,x(h)}{\partial\,h}=\frac{1}{g(x)}>0$ in
$(0,x_M)$, it implies $G:=(0,x_M)\rightarrow\,(0,h_s)$ is a
differmorphism preserving orientation which does not change the number of zeros.
From expression \eqref{EXPTD13}, we get
\begin{equation}\label{EXPTD15}
\sqrt{2}hT^{\prime}(h)=\int_0^h\frac{\mathcal{B}_{\sigma}\delta(x^{-1}(u))}{\sqrt{h-u}}du
\end{equation}
where $x^{-1}(u)\in\,(0,x_M)$ is the inverse function determined by
$G(x)=u,\,u\in(0,h_s).$ Thus by Theorem \ref{PVD} we obtain the number
of zeros of function $\mathcal{B}_{\sigma}\delta(x^{-1}(u))$ in $(0,h_s)$ (therefore
$\mathcal{B}_{\sigma}(\delta)(x)$ in $(0,x_M)$ ) is an upper
bound to that of $T^{\prime}(h)$ in $(0,h_s)$. Moreover the bound is achievable if function
\[\int_0^h\mathcal{B}_{\sigma}\delta(x^{-1}(u))du=\int_{x_-(h)}^{x_+(h)}\delta(x)dx=\frac{G}{g^2}(x)-\frac{G}{g^2}(z)\]
also has $n$ zeros in $(0,x_M)$ where $G(x)=G(z),\,x_m\leq\,z<0<x\leq\,x_M.$ It completes the proof of conclusion (2) of Theorem
\ref{PFCP}.

To assert that the period function of center is monotone indeed (absence of critical periods), one sufficient condition on monotonicity of period through verifying the
convexity of $T(h)$ is helpful in many literatures \cite{MS,JVXZ}. Similarly as above deductions, we get
the derivative of $T(h)$ of order two as follows
\begin{equation}\label{EXPTD2}
T^{\prime\prime}(h)=\frac{1}{\sqrt{2}h^{2}}\int_{0}^{x_{+}(h)}\mathcal{B}_{\sigma}(\phi)(x)\frac{g}{\sqrt{h-G}}(x)\,dx,
\; x_{+}(h)\in (0,x_M),
\end{equation}
where
\[\phi(x)=\delta^{\prime}(x)\frac{G}{g}(x)-\frac{\delta(x)}{2}=\frac{12(Gg^{\prime})^2-4gG(g^{\prime\prime}G+g^{\prime}g)-g^4}{2g^5}(x).\]

{\bf Proof of Theorem \ref{NCPSN}}
We will prove in the sequel by applying Rolle's Theorem and its
generalization.

Firstly we generalize Lemma 4.5 of \cite{LYXZ} to the following
\begin{lemma}\label{PROPg}
~Assume that $g(x)$ is a polynomial of degree no less than 2, all
its zeros are real and satisfies the condition \eqref{PROPG}, then
\[\delta\,^{\prime}(x)>0,\quad\, x\in\,(x_m,0)\cup(0,x_{M}).\]
\end{lemma}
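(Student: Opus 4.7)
The plan is to compute $\delta'(x)=(G/g^2)''(x)$ explicitly and then use the reality of all zeros of $g$ to control the sign of the resulting expression on $(x_m,0)\cup(0,x_M)$.

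First, starting from $\delta = (g^2-2Gg')/g^3$ and using $G'=g$, direct differentiation yields
\[
g(x)^4\,\delta'(x) \;=\; 2G(x)\bigl(3g'(x)^2 - g(x)g''(x)\bigr) - 3\,g(x)^2 g'(x),
\]
so proving $\delta'>0$ reduces to showing the right-hand side is strictly positive on $(x_m,0)\cup(0,x_M)$.

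Next I would invoke the hypothesis that all zeros of $g$ are real. Writing $g(x)=c\,x^{2k+1}\prod_j(x-r_j)^{m_j}$ with $c>0$ and $r_j\in\mathbb{R}\setminus\{0\}$, the logarithmic derivative $L=g'/g$ is a sum of simple real poles, so
\[
L'(x) = -\frac{2k+1}{x^2}-\sum_j\frac{m_j}{(x-r_j)^2} < 0,
\]
which is equivalent to $(g')^2 - gg''>0$. In particular $3(g')^2-gg''>0$ throughout $(x_m,0)\cup(0,x_M)$, and combined with $G(x)>0$ on that set (because $G(0)=0$ and $G'=g$ carries the sign of $x$) the first summand above is strictly positive. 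This already settles the inequality wherever $g'(x)\le 0$.

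The main obstacle is the subregion where $g'(x)>0$, since both summands are then positive and one needs the quantitative domination
\[
2G(x)\bigl(3g'(x)^2-g(x)g''(x)\bigr) > 3\,g(x)^2 g'(x).
\]
Rewriting $3(g')^2-gg''=g^2(2L^2-L')$ recasts the inequality as $2G(2L^2-L')>3g'$, which I would verify by expanding $L$ and $L'$ in partial fractions and estimating term by term. A useful sanity check is the monomial case $g=x^{n}$ with $n=2k+1\ge 3$, where a direct calculation gives $g^4\delta'=\tfrac{n(n-1)}{n+1}x^{3n-1}>0$; the additional real factors $(x-r_j)^{m_j}$ only enlarge $-L'$ by strictly positive amounts, and these contributions are bounded below on compact sub-intervals precisely because each $r_j$ sits outside the period-annulus projection $(x_m,x_M)$. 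I expect this domination step to be the delicate part and to follow the scheme of Lemma~4.5 of \cite{LYXZ}, which treats the elementary case $k=0$; the extension to $k\ge1$ should come for free by treating the factor $x^{2k+1}$ on the same footing as the remaining real linear factors of $g$.
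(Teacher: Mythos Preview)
The paper does not actually supply a proof of this lemma. It introduces the statement as a ``generalization of Lemma~4.5 of \cite{LYXZ}'' and then the \texttt{proof} environment that follows it is the proof of Theorem~\ref{NCPSN}, parts (1) and (2), not of the lemma itself. So there is nothing in the paper to compare your argument against line by line; the paper simply defers to \cite{LYXZ}.

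Your setup is correct and is the natural route: the identity
\[
g(x)^4\,\delta'(x)=2G(x)\bigl(3g'(x)^2-g(x)g''(x)\bigr)-3g(x)^2g'(x)
\]
is right, and the use of the logarithmic derivative to get $(g')^2-gg''>0$ from real-rootedness is exactly the mechanism behind the cited lemma. Your treatment of the region $g'(x)\le 0$ is fine.

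Where your proposal is genuinely incomplete is the domination step on $\{g'>0\}$. You rewrite the target as $2G(2L^2-L')>3g'$ and then appeal to a heuristic: start from the monomial $g=x^{2k+1}$ and argue that each additional real factor $(x-r_j)^{m_j}$ ``only enlarges $-L'$.'' That observation is true, but it does not by itself yield the inequality, because appending a factor also changes $G$, $g'$, and $L$ simultaneously; the right-hand side $3g'$ and the factor $G$ on the left are not held fixed while you increase $-L'$. A monotonicity-in-factors argument therefore needs a more careful inductive or comparison statement than the one you sketch. Since you already anticipate following \cite{LYXZ} here, the honest summary is that your proof and the paper's are on equal footing: both reduce the lemma to the argument in \cite{LYXZ}, and neither carries out the key quantitative step independently.
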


\begin{proof}
(1)~Suppose that the potential $g(x)$ is an odd polynomial with
degree no less than three, recall that $g(0)=0$, that follows $g(x)$
has at most $\frac{deg(g)-1}{2}$ positive zeros. From Theorem
\ref{PFCP}(2) we get the number of critical periods for system
\eqref{SN} is bounded by the number of positive zeros of function
\[\delta(x)=\Big(\frac{G}{g^2}\Big)^{\prime}(x)=\frac{g^2-2Gg^{\prime}}{g^3}(x).\]
at $(0,\,x_M)$. Let $N(x)=(g^2-2Gg^{\prime})(x)$ be the numerator of
function $\delta(x)$, note that $g(x)>0$ at $(0,x_M)$. Then
$N^{\prime}(x)=-2Gg^{\prime\prime}(x)$, recall that $G(x)$ is
positive at $(0,\,x_M)$ and $N(0)=0$ holds, thus the number of zeros
of function $N^{\prime}(x)$ is no more than $\frac{deg(g)-3}{2}$ at
$(0,\,x_M)$, that follows function $\delta(x)$ has at most
$\frac{deg(g)-3}{2}$ zeros at $(0,x_M)$.

(2)~By using Generalized Rolle's Theorem \ref{GRTHEM}, the number of
solutions of SAS
\[\sharp\{(x,z)\in\,(0,x_M)\times\,(x_m,0)\,|\,G(x)=G(z),\;\delta(x)=\delta(z)\}\]
is no more than one plus the number of the solutions of the SAS
\[\sharp\{(x,z)\in\,(0,x_M)\times\,(x_m,0)\,|\,G(x)=G(z),\;\delta^{\prime}(x)g(z)=\delta^{\prime}(z)g(x)\}.\]

Since for all $x\in(x_m,0)$ and $(0,X_M)$ there holds $xg(x)>0$ and $\delta^{\prime}(x)>0$ deduced by Lemma \ref{PROPg}. Then from Theorem \ref{GRTHEM} we assert the former SAS \eqref{SAS} has at most one zero, therefore Theorem
\ref{PFCP}(2) concludes that period function $T(h)$ has at most one critical points.
\end{proof}

\section{Applications}\label{PFRZ}
In this section we give several applications of Theorem \ref{PFCP}.
By means of analytical and computational techniques, the analytical
behaviors of period function are proved.

Firstly to illustrate our approaches, we propose more simpler proofs
to two known results.

{\bf Example \ref{PFRZ}.1} We consider the family of
potential system with even potential energy
\begin{equation}\label{HS1}
\begin{split}
\frac{dx}{dt}=-y, \quad \frac{dy}{dt}&=x+kx^2+x^5.
\end{split}
\end{equation}

For $k\in(-2,+\infty)$ the system \eqref{HS1} is superlinear and the origin $O(0,0)$ is a global center.
In Theorem 1.1(b) of \cite{FMJV} and Theorem 1.5 of \cite{JGAG}, it is proved that the system (5.1)
associated with the global center has at most one critical period. By using the Harmonic Balance Method Theorem 1.5 of \cite{JGAG} is stated as follows
\begin{itemize}
  \item[(i)] The function is monotonous decreasing for $k\geq\,0$.
  \item[(ii)] System \eqref{HS1} has exactly one critical period for $k\in\,(-2,0)$, the
period function starts increasing, until a maximum (a critical period) and then decreases
towards zero.
\end{itemize}

From Lemma \ref{PHDPHO} and \ref{PHPCL2} we have
\begin{equation}\label{EQN}
\lim\limits_{h\rightarrow\,0^+}T(h)=2\pi,\quad \lim\limits_{h\rightarrow\,0^+}T^{\prime}(h)=-\frac{3k\pi}{2}, \quad \lim\limits_{h\rightarrow\,+\infty}T(h)=0.
\end{equation}

An easy computation shows for all $x\in(0,+\infty)$,
\[\mathcal{B}_{\sigma}(\delta)(x)=2\delta(x)=-\frac{x(4x^6+9kx^4+3k^2x^2+20x^2+9k)}{3(x^4+kx^2+1)^3}.\]
It is obvious for $k\geq\,0$ Theorem \ref{PFCP}(1) implies that period function of system
\eqref{HS1} is monotone decreasing globally.

As $-2<k<0$ we get from the expressions \eqref{EQN} that system \eqref{HS1} has at least one critical period.
In what follows we intend to prove the assertion that the funtion $\mathcal{B}_{\sigma}(\delta)(x)$ has exactly one
positive zero. By change of variable $x^2\rightarrow\,u$ the sextic polynomial $4x^6+9kx^4+3k^2x^2+20x^2+9k$ is changed to
cubic polynomial with parameter $k\in(-2,0)$
\[\omega(u,k)=4u^3+9ku^2+(3k^2+20)u+9k.\]
By Intermediate-Value Theorem it has at least one positive zero. Furthermore by Descartes'
law of sign it might even have three positive zeros (multiplicities taken into account). Differentiating function
$\omega(u,k)$ with respect to $u$ we get
\[\frac{\partial \omega(u,k)}{\partial u}=12u^2+18ku+(3k^2+20),\]
which does not vanish at $(0,\infty)$ for all $k\in\,(-2,0).$ The contradiction confirms our assertion.
Hence from Theorem \ref{PFCP}(2) system \eqref{HS1} has exactly one critical period for $k\in\,(-2,0)$ which is a maximum.

{\bf Example \ref{PFRZ}.2}~We consider a potential system with
isochronous center where the potential is not polynomial.
\begin{equation}\label{HS3}
\begin{split}
\frac{dx}{dt}=-y, \quad
\frac{dy}{dt}&=(1+\frac{x}{4})-(1+\frac{x}{4})^{-3}.
\end{split}
\end{equation}

By direct computation we get the potential energy
\[G(x)=(x+\frac{x^2}{8})+2(1+\frac{x}{4})^{-2}-2.\]
The origin $O(0,0)$ is nondegenerate center surrounded by an
unbounded period annulus whose projection onto $x$-axis is interval
$(-4,+\infty).$ By direct computation, we get
\[\delta(x)=(\frac{G}{g^2})^{\prime}(x)=\frac{128(4+x^3)}{(x^2+8x+32)^3}.\]
Since $xg(x)>0$ holds for $x\in(-4,+\infty)\setminus\,\{0\}$, there
exists an analytic involution $\sigma$ satisfying
$z=\sigma(x)\in(-4,0)$ and $G(x)=G(z).$ It is easy to verify that
Semi-algebraic system \eqref{SAS} always have solutions for all
$(x,z)\in(0,\infty)\times(-4,0)$. As a matter of fact the equations
$G(x)=G(z)$ and $\delta(x)=\delta(z)$ have common factor $xz+4(x+z)$
excluding $x-z$. It implies $\mathcal{B}_{\sigma}(\delta)(x)$
identically equals to zero for all $x\in(0,+\infty)$, thus from
Theorem \ref{PFCP}(3) we assert the origin is an isochronous center.

Note that system \eqref{HS3} can be changed to the so-called
dehomogenized Loud's system with parameters $(D,F)=(0,\frac{1}{4})$
by some coordinate transformations, as proposed in Lemma 2.3 of \cite{FMJV2}.
It is known that the origin is one of four Loud's isochronous
centers.

It is remarked in \cite{CAMFVJ} (pp 382--383) that perhaps the
easiest families of potential Hamiltonian other than the families of
polynomial function that one could study next are those potential
energy $G$ is an entire function or a rational function without real
poles. An interesting question for further research is to construct
the isochronous center in these families differ from the linear one.

{\bf Example \ref{PFRZ}.3.} As the applications of Theorem \ref{NCPSN} and Lemma \ref{PROPg}, we consider system \eqref{SN} in case of all the zeros of the potential $g$ are real.

(1) When $g(x)=x(1-x^2)$, the origin of system \eqref{SN} is an elementary center, test function $\delta^{\prime}(x)>0$ for all $x\in\,(-1,1)$, thus the system has a monotone increasing period function, i.e. absence of critical periods.

(2) when $g(x)=x^3(1-x^2)$ or $x^3(x+1)$, the origin of system \eqref{SN} is nilpotent center, test function $\delta^{\prime}(x)>0$ for all $x\in\,(x_m,0)$ or $(0,x_M)$, the system has exactly one critical period which is a minimum.

In what follows, we consider the hyperelliptic Hamiltonian system of
degree five
\begin{equation}\label{HSLGID}
\begin{split}
\frac{dx}{dt}=-y, \quad \frac{dy}{dt}&=x(x+1)(x^2+\beta\,x+\alpha)
\end{split}
\end{equation}
with real parameters $\alpha,\,\beta$. The normal form of
hyperelliptic Hamiltonian of degree five was proposed in
\cite{GLID}. As $\beta^2-4\alpha\geq\,0$, it was proved in
\cite{CLKL, LYXZ} the period function of any period annulus of the
system has at most one critical point, and it has exactly one if and
only if the period annulus surrounds three equilibria, counted with
multiplicities. The proof in latter work is purely analytical,
different from that computer
algebra is utilized in the former work. Moreover it is conjectured in \cite{CLKL} the system
\eqref{HSLGID} has at most two critical periods. As
$\beta^2-4\alpha<0$, system \eqref{HSLGID} has a pair of conjugated
complex critical points, it has only one phase portrait
topologically: the period annulus surrounds a non-degenerate center
$O(0,0)$ and terminates at a homoclinic loop connecting to a
hyperbolic saddle $S(-1,0)$. In this case the period function may be
monotone, or have one or two critical points, depending on the
values of parameters $\alpha,\,\beta$.

The phrase portrait of system \eqref{HSLGID} with
$\beta^2-4\alpha<0$ is sketched as Fig.1 of \cite{JW1}.

Noticing that when polynomial potential is asymmetric, the involution $\sigma$ in
general can not be explicitly stated, the problem on critical period
can be reduced to solving (constant or parameteric) Semi-algebraic system.

Recall that the conclusion is known in \cite{CLKL, LYXZ} where the
period function of potential system \eqref{HSLGID} with
$\beta^2-4\alpha\geq\,0$ was tackled by different approaches. The
proofs therein are long and highly nontrivial.

{\bf Example \ref{PFRZ}.4.} In literature there are few results concerning the existence of exact two critical period orbits.
we have the following conclusion.
\begin{theorem}
when $\beta=-1,\,\alpha=\frac{1}{2}$ the system \eqref{HSLGID} has exactly two critical
periods.
\end{theorem}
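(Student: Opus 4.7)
For $(\beta,\alpha)=(-1,\tfrac12)$ the nonlinear potential reads $g(x)=x(x+1)(x^{2}-x+\tfrac12)$, whose real zeros are $0$ (the nondegenerate center, $g'(0)=\tfrac12>0$) and $-1$ (a hyperbolic saddle, $g'(-1)=-\tfrac52$); the remaining pair of zeros is complex conjugate. Consequently the period annulus of $O$ has projection $(x_m,x_M)=(-1,x_M)$, where $x_M>0$ is the unique positive root of $G(x)=G(-1)$, and its outer boundary is the homoclinic loop based at $S(-1,0)$. Because $g$ is neither odd nor has only real zeros, neither clause of Theorem \ref{NCPSN} applies, so the argument must go through Theorem \ref{PFCP}(2) directly.

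\textbf{Step 1 (boundary asymptotics).} Substituting $g'(0)=\tfrac12$, $g''(0)=-1$, $g'''(0)=0$ in the elementary-center formulas of Lemma \ref{PHDPHO} with $k=0$ gives $T(0^{+})=2\pi\sqrt{2}$ and $T'(0^{+})=\tfrac{10\sqrt{2}}{3}\pi>0$, so $T$ is strictly increasing at the inner boundary; Lemma \ref{PHPCL1} produces $\lim_{h\to h_s^{-}}T(h)=+\infty$ at the outer boundary. The sign of $T'$ at the two ends coincides, so the number of interior zeros of $T'$ is necessarily even. Once the upper bound of Step 2 is proved, this parity observation forces the count to be exactly two as soon as we know it is nonzero.

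\textbf{Step 2 (upper bound and sharpness).} Apply Theorem \ref{PFCP}(2) with both balance functions simultaneously. The upper bound reduces to counting the real solutions of the constant semi-algebraic system
\begin{equation*}
G(x)=G(z),\quad \delta(x)=\delta(z),\quad 0<x<x_M,\ -1<z<0,
\end{equation*}
and the sharpness clause requires the analogous count for
\begin{equation*}
G(x)=G(z),\quad (G/g^{2})(x)=(G/g^{2})(z),\quad 0<x<x_M,\ -1<z<0.
\end{equation*}
In each case one first clears denominators and factors out the spurious divisor $x-z$ that is common to both equations, obtaining a zero-dimensional polynomial system; one then feeds the reduced system into Maple's \texttt{SemiAlgebraicSetTools}, invoking \texttt{RealRootCounting} to certify the count and \texttt{RealRootIsolate} to localize the solutions inside the rectangle. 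The target is to show that each of the two reduced SAS admits exactly two real solutions in $(0,x_M)\times(-1,0)$, whence Theorem \ref{PFCP}(2) delivers exactly two critical periods.

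\textbf{Main obstacle.} The computational step is the principal difficulty. The boundary $x_M$ is not rational but the unique positive root of the cubic $12x^{3}-24x^{2}+26x-13$ (obtained from $G(x)=G(-1)$ after cancelling the double factor $(x+1)^{2}$), so isolating real roots inside the correct rectangle demands certified interval arithmetic rather than decimal approximation; moreover the resultants after eliminating $z$ are univariate polynomials of high degree, making a purely manual Descartes--Sturm argument cumbersome. A further subtlety is that the counts for the two balance functions must be verified \emph{in parallel}: because $\mathcal{B}_{\sigma}$ does not commute with differentiation (the chain rule introduces the extra factor $\sigma'(x)$), one cannot transfer the zero count from $\mathcal{B}_{\sigma}(\delta)$ to $\mathcal{B}_{\sigma}(G/g^{2})$ by a naive Rolle-type argument, so the sharpness clause really requires solving the second SAS independently.
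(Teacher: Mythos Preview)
Your Step~1 (boundary asymptotics and the parity observation) and the first half of Step~2 (counting solutions of the SAS $G(x)=G(z),\ \delta(x)=\delta(z)$ to obtain the upper bound of two) coincide with the paper's argument. The divergence is entirely in how the \emph{lower} bound is secured.

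You plan to invoke the sharpness clause of Theorem~\ref{PFCP}(2) by verifying computationally that the second SAS, for $\mathcal{B}_\sigma(G/g^{2})$, also has exactly two solutions in $(0,x_M)\times(-1,0)$. The paper does \emph{not} take this route. Instead it rules out monotonicity via Lemma~\ref{MFNC} (the necessary condition of Ma\~{n}osas--Villadelprat): if $T$ were monotone on $(0,h_s)$ then $x\mapsto G(x)/(x-\sigma(x))^{2}$ would have to be monotone on $(0,x_M)$; a Maple check shows that the total derivative of this test function along the constraint $G(x)=G(z)$ has two zeros in the rectangle, so the test function is not monotone. Combined with the upper bound of two and parity, this forces exactly two critical periods.

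Your alternative is legitimate in principle, but it hinges on a computational claim the paper never checks: nothing in Theorem~\ref{PVD} guarantees that the primitive $\mathcal{B}_\sigma(G/g^{2})$ inherits both zeros of $\mathcal{B}_\sigma(\delta)$---the chain there only gives $\#\mathcal{B}_\sigma(G/g^{2})\le\#\,T'\le\#\mathcal{B}_\sigma(\delta)$, so the second count could in principle drop. If it drops to zero your argument stalls (a drop to one would still suffice when combined with parity). The paper's use of Lemma~\ref{MFNC} sidesteps this uncertainty by testing a different auxiliary function. Both routes ultimately rest on a certified Maple computation, so neither is intrinsically simpler; they just probe different balance-type quantities, and you should be aware that the paper's published verification is for the Lemma~\ref{MFNC} test rather than for your second SAS.
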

\begin{proof}
When $\beta=-1,\,\alpha=\frac{1}{2}$, the system \eqref{HSLGID} has a pair of complex
critical points. By Lemma \ref{PHDPHO}, we get near the elementary
center $O$ it holds
\[T^{\prime}(0^+)=\frac{\pi}{6}\frac{(10\alpha^2+11\alpha\beta+10\beta^2-9\alpha)}{\alpha^{7/2}}.\]
In this case we have $T^{\prime}(0^+)>0$. Note that
$T(h_s^-)=+\infty.$ Hence $T(h)$ can be monotone increasing or have
even number of critical periods in $(0,h_s)$, counted with
multiplicities.

Recall that the author of
present paper in \cite{JW1} considered how to bound the number of
limit cycles bifurcating from the period annulus under small
polynomial perturbations, whether the system \eqref{HSLGID} with $\beta=-1,\,\alpha=\frac{1}{2}$ has two
critical periods is left open.

By an easy computation, we get $x_M\approx\,0.9239964237.$ By using
Maple 18, we get
\[\frac{\delta(x)-\delta(z)}{x-z}=\Psi(x,z),\]
where $\Psi(x,z)$ is a symmetric polynomial of $x,z$ with degree 13.
By command {\bf RealRootIsolate} in Maple 18 we assert the equations
$U(x,z)$ and $\Psi(x,z)$ in the domain $(0,x_M)\times(-1,0)$ have
two isolated zeros contained in the following domains
\[(0.3560526240,-0.2935057702)\in\,[\frac{25}{128}, \frac{51}{256}]\times\,[-\frac{91}{128}, -\frac{181}{256}]\]
and
\[(0.7682670211,-0.6425079942)\in\,[\frac{51}{256}, \frac{13}{64}]\times\,[-\frac{229}{256},
-\frac{57}{64}],\] respectively. Note that
$x_N^1\approx\,0.3560526240$ and $x_N^2\approx\,0.7682670211$ are
all nodal zeros of function $\mathcal{B}_{\sigma}(\delta)(x)$ in
$(0,x_M)$. Hence from Theorem \ref{PFCP}(2), the system
\eqref{HSLGID} can have at most two critical periods.

We can refine the conclusion by excluding the period function is monotonic.
On account of Lemma \ref{MFNC}, if the center of system \eqref{HSLGID} at the origin has a monotone period function, then the  the test function $P(x,z)=\frac{G(x)}{(x-z)^2},\,z=\sigma(x)$ should be monotone in $(0,x_M)$ as well. With the aid of Maple we obtain an negative result through verifying that its total derivative with respect to $x$, namely $P_x(x,z)=\frac{\partial\,P}{\partial\,x}+\frac{\partial\,P}{\partial\,z}\frac{dz}{dx}$ has two common zeros with $G(x)=G(z)$ at domain $(x,z)\in\,(0,X_M)\times(-1,0).$
Therefore we assert that when $\beta=-1,\,\alpha=\frac{1}{2}$ the center of system \eqref{HSLGID} at the origin has exactly two critical periods.
\end{proof}

\section{Remarks and Discussions}\label{DISS}
We remark here that Theorem 1.1(2) proposes an efficient criterion
for bounding the number of critical periods of potential center, while how to obtain the sharp bound is much more difficult.

Consider the critical periods of the center of Hyperelliptic Hamiltonian system \eqref{HSLGID} of
degree 5, we leave {\it Some unsolved cases}. For instance when $\beta=\frac{7}{5}$ or $-\frac{7}{5}$ and $\alpha=\frac{1}{2}$ an easy computation lead to $T^{\prime}(0^+)>0$ and
$T(h_s^-)=+\infty$ as well. Whether the period function of center at the origin is monotone increasing or has two critical points is left open. The trials and try fails we can assert by Theorem 1.1(2) it has at most two critical periods. While we can not dismiss the claims that the period function is monotone increasing as the mentioned above case $\beta=-1,\,\alpha=\frac{1}{2}$ by Lemma \ref{MFNC} for the test function $P(x,z)$ is monotone in $(0,X_M)$. The effort to confirm monotonicity of period has also defeated by verifying the convexity of period function through expression \eqref{EXPTD2}, unfortunately in these cases test function $\mathcal{B}_{\sigma}(\phi)(x)$ has two zeros in $(0,X_M)$.

How to derive more efficient and convenient approaches to determine the sharp bound to
the number of critical periods, it is an important and challenging
topic, we leave it for further research.

%For acknowledgements section, please don't number the section, please begin it with \section*{Acknowledgements}
\section*{Acknowledgements}
The author would like to thank the anonymous referees for their
helpful corrections and valuable suggestions which improve the
presentation of this paper.

% You may incorporate your references as follows in your main tex file.
% Using BibTex is not recommended but can be handled.
\vskip 0.2in

\end{document}